\numberwithin{equation}{section}
\theoremstyle{plain}
\newtheorem{theorem}{Theorem}[section]
\newtheorem{proposition}[theorem]{Proposition}
\newtheorem{lemma}[theorem]{Lemma}
\newtheorem{corollary}[theorem]{Corollary}
  \theoremstyle{remark}
  \theoremstyle{definition}
\newtheorem{definition}[theorem]{Definition}
\def\R{\mathbb{R}}
\def\N{\mathbb{N}}
\begin{document}
\subjclass[2010]{35J20, 35B06, 35B09}

\keywords{Supercritical elliptic equations, Variational methods, Invariant cones, Symmetric nonradial solutions}

\title[Supercritical problems in exterior domains]{Multiplicity and symmetry breaking for supercritical elliptic problems in exterior domains}

\author[A. Boscaggin]{Alberto Boscaggin}
\address{Alberto Boscaggin\newline\indent
Dipartimento di Matematica
\newline\indent
Universit\`a di Torino
\newline\indent
via Carlo Alberto 10, 10123 Torino, Italy}
\email{alberto.boscaggin@unito.it}

\author[F. Colasuonno]{Francesca Colasuonno}
\address{Francesca Colasuonno\newline\indent
Dipartimento di Matematica
\newline\indent
Universit\`a di Bologna
\newline\indent
p.zza di Porta San Donato 5, 40126 Bologna, Italy}
\email{francesca.colasuonno@unibo.it}

\author[B. Noris]{Benedetta Noris}
\address{Benedetta Noris \newline \indent 
Dipartimento di Matematica \newline\indent
Politecnico di Milano \newline\indent
p.zza Leonardo da Vinci 32, 20133 Milano, Italy}
\email{benedetta.noris@polimi.it}

\author[T. Weth]{Tobias Weth}
\address{Tobias Weth\newline \indent
Institut f\"ur Mathematik\newline \indent
Goethe-Universit\"at Frankfurt\newline \indent
Robert-Mayer-Str. 10, D-60629 Frankfurt am Main, Germany}
\email{weth@math.uni-frankfurt.de}


\begin{abstract} 
We deal with the following semilinear equation in exterior domains
\[  -\Delta u + u = a(x)|u|^{p-2}u,\qquad u\in H^1_0({A_R}), \]
where ${A_R} := \{x\in\mathbb{R}^N:\, |x|>{R}\}$, $N\ge 3$, $R>0$. Assuming that the weight $a$ is positive and satisfies some symmetry and monotonicity properties, we exhibit a positive solution having the same features as $a$, for values of $p>2$ in a suitable range that includes exponents greater than the standard Sobolev critical one.
In the special case of radial weight $a$, our existence result ensures multiplicity of nonradial solutions. We also provide an existence result for supercritical $p$ in nonradial exterior domains.
\end{abstract}

\maketitle

\section{Introduction}

In this paper we study existence and multiplicity of solutions to the following semilinear problem
\begin{equation}\label{P}
   \begin{cases}
  -\Delta u + u = a(x)u^{p-1} \qquad &\mbox{in }{A_R}\\
u>0 &\mbox{in }{A_R}\\
u\in H^1_0({A_R}) &
  \end{cases}
\end{equation}
where ${A_R}$ is the exterior of a ball of radius $R>0$, namely ${A_R} := \{x\in\mathbb{R}^N:\, |x|>{R}\}$, with $N\ge 3$, $a> 0$, and $p>2$ belongs to a suitable range that includes exponents greater than the Sobolev critical one. 

This problem lacks compactness due to both the unboundedness of the domain and the supercritical growth of the nonlinearity. We are able to recover compactness under the assumption that the weight $a$ satisfies some symmetry and monotonicity properties, working in a closed and convex subset of $H^1_0({A_R})$-functions having the same symmetry and monotonicity features as $a$. Our analysis covers the case of radial weights, in which we get, under suitable assumptions, multiple, rotationally nonequivalent, nonradial solutions. In particular, for the well-studied problem 
\begin{equation}\label{eq:pbOmega}
   \begin{cases}
  -\Delta u + u = u^{p-1} \qquad &\mbox{in }\Omega,\\
u>0 &\mbox{in }\Omega,\\
u\in H^1_0(\Omega),
  \end{cases}
\end{equation}
with constant weight $a\equiv 1$, we obtain the existence of nonradial solutions when $\Omega={A_R}$, $N \ge 3$, and $p \geq 2+\frac{8N}{(N-2)^2}$, see Corollary~\ref{coro:multiplicity} below.

We recall that, if $\Omega =\{x\in\R^N:\, |x|<R\}$, problem \eqref{eq:pbOmega} does not admit nontrivial solutions in the critical or supercritical regime, by the Poho\v{z}aev identity. Moreover, the Gidas-Ni-Nirenberg symmetry preservation result \cite{GNN} ensures that every regular solution is radial. On the other hand, when the domain is a bounded annulus, not only there are no natural constrains on $p$ for the existence of nontrivial solutions, but also symmetry breaking phenomena may arise, cf. the introduction of papers \cite{Li,B}. More precisely, for bounded annuli with sufficiently small interior radius, a symmetry preservation result is proved in \cite{GPY}, while, in expanding annuli, the existence and multiplicity of nonradial solutions is obtained in \cite{BrezisNirenberg,Coffman,Li,B,CW,gladiali-et-al2010,ByeonKimPistoia2013}. 
In the more recent paper \cite{BCNW2023}, we exhibit an explicit threshold of the inner radius above which the problem admits a nonradial solution, see also \cite{CowanMoameni2022JDE,CowanMoameni2022}. As remarked in \cite{BCNW2023}, this nonradial solution is axially symmetric but not foliated Schwarz symmetric and therefore has high Morse index, $i> N$, by \cite[Theorem 1.1]{PW}. The fact that the sufficient condition for symmetry breaking does not involve the outer radius motivated the investigation of similar results in exterior domains, heuristically letting the outer radius go to infinity. From a technical point of view, recovering the necessary compactness is a more delicate issue in exterior domains, due to the unboundedness; nonetheless, the threshold obtained for symmetry breaking is the same as in bounded annuli.

In order to state our main results, we shall first introduce some notation. Let $\R^N=\R^m\times\R^{N-m}$ with $m \in \{2,\ldots,N-1\}$. We shall work with functions that only depend on $r=|x|=\sqrt{x_1^2+\ldots+x_N^2}\in [R,\infty)$ and $\theta \in \left[0,\pi/2\right]$ given by
\begin{equation}\label{eq:theta}
\theta= \arctan\left(\frac{\sqrt{x_{m+1}^2+\ldots+x_N^2}}{\sqrt{x_1^2+\ldots+x_m^2}}\right) = 
\arcsin\left(\frac{1}{r}\sqrt{x_{m+1}^2+\ldots+x_N^2}\right) ,
\end{equation}
namely
\[
u(x) = \mathfrak{u}\left(r,\theta \right)\quad \text{for a function } \mathfrak{u}: [R,\infty)\times\left[0,\frac{\pi}{2}\right] \to \mathbb{R}.
\]
We observe that the symmetry assumption $u(x)=\mathfrak u(r,\theta)$ implies that $u$ is even with respect to every variable $x_i$. 
When $m=N-1$,  we get as a particular case axially symmetric functions  with respect to the $x_N$-axis, which are symmetric with respect to the hyperplane $x_N=0$ (see \cite{BCNW2023}).

In addition to these symmetry assumptions, we impose the positivity and monotonicity properties that we collect in the following class of functions
\begin{equation}
  \label{def-cal-C}
\mathcal C:= \Bigl\{u\in W^{1,1}_{\mathrm{loc}}({A_R})\,:\, u=\mathfrak{u}(r,\theta),\: \,u\ge 0,\: \mathfrak{u}_\theta\le 0\:  \mbox{ a.e. in } {A_R}
\Bigr\},
\end{equation}
where $\mathfrak{u}_\theta$ stands for the weak partial derivative of $u$ with respect to the variable $\theta$.
Notice that, being $u\in W^{1,1}_{\mathrm{loc}}({A_R})$, we have $\mathfrak{u}\in W^{1,1}_{\mathrm{loc}}((R,\infty)\times(0,\pi/2))$ (see for example \cite[Proposition 9.6]{brezis2010}), so that $\mathfrak{u}_\theta \in L^1_{\mathrm{loc}}((R,\infty)\times(0,\pi/2))$. 
Furthermore, according to \cite[\S 4.9.2, Theorem 2]{EG}, there exists a representative of $\mathfrak u$, still denoted by $\mathfrak u$, such that for almost every $r\in (R,\infty)$, $\mathfrak u(r,\cdot)$ is absolutely continuous. As a consequence, the sign assumption on $\mathfrak u_\theta$ appearing in the definition of $\mathcal C$ implies that for a.e. $r\in(R,\infty)$,
\begin{equation}\label{eq:monotonicity}
\mathfrak{u}(r,\theta_1)\ge \mathfrak{u}(r,\theta_2)\quad \mbox{ if } \ 0\leq \theta_1<\theta_2< \pi/2.
\end{equation}
In particular, $\mathfrak{u}$ achieves its maximum on the set where $\theta=0$, that is the linear subspace $\{x\in\mathbb R^N\,:\, x_{m+1}=\dots=x_N=0\}$.
For future use, we also observe that the set $\mathcal C$ is a convex cone.

Concerning the weight function $a$ appearing in \eqref{P}, we assume that
\begin{equation}\label{a_assumptions}
a \in \mathcal C \cap L^\infty({A_R}), \quad |\nabla a|\in L^2({A_R}), \quad \text{and}\quad a \not \equiv 0.
\end{equation}
In particular, $a=\mathfrak{a}(r,\theta)$.
Under these hypotheses, we look for solutions to \eqref{P} belonging to the following subset of $\mathcal C$
\begin{equation}\label{cone}
\mathcal K:= \mathcal C \cap H^1_0({A_R}).
\end{equation}
$\mathcal K$ is a convex cone, which is closed with respect to the $H^1$-norm, see Lemma~\ref{le:Kcone} below.
Moreover, under the assumption
\[
p\in(2,2^*_{N-m+1}),
\]
$\mathcal K$ is continuously embedded in $L^p({A_R})$,  see Lemma \ref{le:continuous_embedding} ahead.
Here, for every integer $n \geq 2$, $2^*_n$ denotes the critical Sobolev exponent in $\R^n$, that is
\begin{equation}\label{eq:critical_exponent}
2^*_{n}=
\begin{cases}
\frac{2n}{n-2} \qquad &\text{if } n\geq 3 \\
\infty & \text{if } n = 2.
\end{cases}
\end{equation}
We also adopt the standard notation $2^*:=2^*_N$ for the critical Sobolev exponent in dimension $N$.
We remark that  
\begin{equation}\label{eq:assumptions_p}
2^*_{N-m+1}>2^*
\end{equation}
holds for every  $N\geq3$ and $m \in \{2,\ldots,N-1\}$. 

As a consequence of the embedding of $\mathcal K$ in $L^p({A_R})$, the $C^2$-functional $I: H^1_0({A_R}) \cap L^p({A_R}) \to \mathbb R$ given by
\begin{equation}\label{eq:I_def}
I(u):=\frac{1}{2}\int_{A_R}(|\nabla u|^2+u^2)dx-\frac{1}{p}\int_{A_R} a|u|^p dx
\end{equation}
is well-defined in $\mathcal K$. This allows us to look for a nontrivial solution $u\in \mathcal K$ of \eqref{P} by variational methods. 
To this aim, we introduce the following Nehari-type set
\[
\mathcal N_\mathcal K:= \{u \in \mathcal{K}\::\: u \not\equiv 0, \,I'(u)u=0\}
\]
and the Nehari value
\begin{equation}\label{eq:Ic-equivalent}
c_I := \inf_{u \in {\mathcal N}_{\mathcal{K}}} I(u).
\end{equation}

With this notation, our first main result reads as follows.

\begin{theorem}\label{thm:main_existence}
Let $N\geq3$, $R>0$, let $a$ satisfy \eqref{a_assumptions}, and let $p\in(2,2^*_{N-m+1})$. Then there exists a nontrivial solution $u$ of problem \eqref{P} such that 
\begin{equation}\label{eq:K-ground-state}
u\in\mathcal N_\mathcal K\quad\mbox{and}\quad I(u)=c_I>0.
\end{equation}
\end{theorem}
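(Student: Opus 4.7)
The plan is to prove Theorem~\ref{thm:main_existence} by the standard Nehari-manifold scheme adapted to the convex cone $\mathcal{K}$, and then to upgrade the constrained minimizer to a bona fide weak solution of \eqref{P} by combining the cone structure with the symmetries of the problem.

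The first step is the fiber analysis. For every $u\in\mathcal{K}\setminus\{0\}$, the map $t\mapsto I(tu)$, $t>0$, is of the usual mountain-pass shape (positive quadratic leading term minus a positive $p$-homogeneous term with $p>2$), so it admits a unique maximizer $t_u>0$, characterized by $t_u u\in\mathcal{N}_\mathcal{K}$. In particular $\mathcal{N}_\mathcal{K}\neq\emptyset$ and
\[
c_I=\inf_{u\in\mathcal{K}\setminus\{0\}}\max_{t>0}I(tu),
\]
and on $\mathcal{N}_\mathcal{K}$ one has $I(u)=(\tfrac12-\tfrac1p)\|u\|_{H^1}^2$. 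Combining the Nehari identity $\|u\|_{H^1}^2=\int_{A_R}a|u|^p\,dx$ with the continuous embedding $\mathcal{K}\hookrightarrow L^p(A_R)$ yields a uniform lower bound $\|u\|_{H^1}\geq\gamma>0$ on $\mathcal{N}_\mathcal{K}$, whence $c_I\geq(\tfrac12-\tfrac1p)\gamma^2>0$.

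Next I would extract a minimizer. A minimizing sequence $\{u_n\}\subset\mathcal{N}_\mathcal{K}$ is bounded in $H^1_0(A_R)$ by the previous identity, so up to a subsequence $u_n\rightharpoonup u$ weakly. Since $\mathcal{K}$ is a closed convex cone, it is weakly closed (Mazur), and $u\in\mathcal{K}$. The crucial input at this point is the \emph{compactness} of the embedding $\mathcal{K}\hookrightarrow L^p(A_R)$ in the range $p\in(2,2^*_{N-m+1})$, which I expect to be proved as a separate lemma exploiting the combined effect of the partial rotational invariance and the monotonicity $\mathfrak{u}_\theta\leq 0$ (in the spirit of Strauss-type pointwise radial-decay estimates). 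Granted compactness, $u_n\to u$ strongly in $L^p(A_R)$, hence $\int a|u_n|^p\to\int a|u|^p\geq\gamma^2>0$ and $u\not\equiv 0$. Let $t_\ast>0$ be the scalar such that $t_\ast u\in\mathcal{N}_\mathcal{K}$. Weak lower semicontinuity of the $H^1$-norm combined with strong $L^p$-convergence gives $\|u\|_{H^1}^2\leq\int a|u|^p$, i.e.\ $t_\ast\leq 1$, and then
\[
c_I\leq I(t_\ast u)=\Bigl(\tfrac12-\tfrac1p\Bigr)t_\ast^2\|u\|_{H^1}^2\leq\Bigl(\tfrac12-\tfrac1p\Bigr)\liminf_n\|u_n\|_{H^1}^2=c_I,
\]
which forces $t_\ast=1$ and $u\in\mathcal{N}_\mathcal{K}$ with $I(u)=c_I$.

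Finally, I would upgrade the constrained minimizer $u$ to a solution of \eqref{P}. Since $\mathcal{K}$ is a convex cone containing $u$, the convex combinations $(1-s)u+s\varphi$ lie in $\mathcal{K}$ for every $\varphi\in\mathcal{K}$ and $s\in[0,1]$; projecting them onto $\mathcal{N}_\mathcal{K}$ and invoking the minimality of $u$ yields the variational inequality $\langle I'(u),\varphi-u\rangle\geq 0$ for all $\varphi\in\mathcal{K}$. Testing with $\varphi=2u$ and with $\varphi\to 0$ gives $\langle I'(u),u\rangle=0$ (already encoded in the Nehari constraint) and $\langle I'(u),\varphi\rangle\geq 0$ for all $\varphi\in\mathcal{K}$. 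To pass from this cone-wise inequality to the Euler--Lagrange equation on all of $H^1_0(A_R)$, I would invoke Palais' principle of symmetric criticality to reduce to test functions symmetric under $G=O(m)\times O(N-m)$ composed with the coordinate reflections, and then combine this with a polarization (foliated-Schwarz type) argument to express an arbitrary $G$-symmetric test function as a difference of functions in $\mathcal{K}$. Strict positivity $u>0$ then follows from the strong maximum principle applied to $-\Delta u+u=au^{p-1}\geq 0$.

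The main obstacle is the compactness of $\mathcal{K}\hookrightarrow L^p(A_R)$ in the supercritical range $p\geq 2^*$: this is precisely where the unbounded domain and the supercritical growth would normally destroy the Palais--Smale analysis, and it is rescued only by the monotonicity in $\theta$ built into $\mathcal{C}$, which plays the role of a generalized radial-decay condition. A secondary delicate point is the cone-to-equation upgrade, which requires a careful interaction between symmetric criticality and polarization to turn the one-sided variational inequality on $\mathcal{K}$ into an equality on $H^1_0(A_R)$.
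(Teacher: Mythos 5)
Your fiber analysis, the extraction of a minimizer on $\mathcal{N}_\mathcal{K}$, and the reliance on the compact embedding $\mathcal{K}\hookrightarrow L^p(A_R)$ are all in line with the paper (which proves the compactness in Lemma~\ref{le:continuous_embedding}, Lemma~\ref{lem:est1}, Lemma~\ref{lem:willem}, and Propositions~\ref{le:subcritical_compact_embedding} and~\ref{prop:compact_embedding}). The variational inequality $\langle I'(u),\varphi-u\rangle\geq 0$ for all $\varphi\in\mathcal{K}$ is also correct. The genuine gap is in the final step, where you propose to pass from the cone-wise inequality to the equation $I'(u)=0$ via Palais' symmetric criticality combined with a polarization argument.

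That route does not work here. Symmetric criticality only reduces the test space to the $O(m)\times O(N-m)$-invariant functions, i.e.\ to functions of the form $\mathfrak{u}(r,\theta)$; it says nothing about the additional \emph{pointwise} constraints $u\geq 0$ and $\mathfrak{u}_\theta\leq 0$ which define $\mathcal{K}$ inside that invariant subspace and which are \emph{not} encoded by any group action. Moreover, even if every $G$-invariant $\varphi$ could be written as $\varphi=\varphi_1-\varphi_2$ with $\varphi_1,\varphi_2\in\mathcal{K}$ (which already fails: the positive and negative parts of a general $\tilde{\varphi}(r,\theta)$ are not monotone in $\theta$, and a BV-type decomposition would destroy $H^1$-control), the one-sided inequalities $\langle I'(u),\varphi_i\rangle\geq 0$ give no sign information on the difference $\langle I'(u),\varphi_1\rangle-\langle I'(u),\varphi_2\rangle$. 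What is actually needed — and this is the content of Cowan--Moameni's abstract theorem which the paper invokes — is the \emph{pointwise invariance property}: for $u\in\mathcal{K}$, the unique solution $v\in H^1_0(A_R)$ of the linear auxiliary problem $-\Delta v+v=a\,u^{p-1}$ again belongs to $\mathcal{K}$. This is proved in the paper (Lemma~\ref{le:invarianceK} and Proposition~\ref{prop:invarianceC}) by elliptic regularity, the maximum principle, and differentiating the equation in $\theta$ to show $\mathfrak{v}_\theta\leq 0$; proving the monotonicity of $v$ requires a genuine PDE argument with a singular potential and is the technical heart of the invariance step. Once $v\in\mathcal{K}$ is available, one tests the variational inequality with $\varphi=v$: using that $v$ solves the linear problem, the inequality becomes $-\tfrac12\|u-v\|_{H^1}^2\geq 0$, forcing $u=v$, i.e.\ $u$ solves \eqref{P}. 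Your proposal is missing this invariance lemma entirely, and the substitute you suggest cannot replace it.
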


In the following, we shall refer to a solution of \eqref{P} satisfying \eqref{eq:K-ground-state} as a {\em $\mathcal{K}$-ground state solution}.

From \eqref{eq:assumptions_p}, we see that Theorem \ref{thm:main_existence} provides the existence of solutions to \eqref{P} also in the supercritical regime.

The proof of the previous theorem relies on a recent abstract result by Cowan and Moameni, \cite[Theorem 2.1]{CowanMoameni2022}, that we recall in Section \ref{subsec:abstract_theorem}. This result is based in turn on the critical point theory of Szulkin for non-smooth functionals \cite{S} and 
provides a general strategy to prove existence of solutions to supercritical problems under symmetry and monotonicity assumptions. Roughly speaking, it requires the presence of an invariant closed and convex subset of the functional space, in which compactness is recovered: such a set is typically defined in terms of symmetry and monotonicity properties. Similar ideas were previously exploited in \cite{BNW,ST,CM} to build solutions of supercritical Neumann problems in bounded domains, see also \cite{ABF-ESAIM,ABF-PRE,CN} for related results.

In the present paper, we prove that the abstract result in \cite{CowanMoameni2022} applies to problem \eqref{P} which has compactness issues due both to the supercritical growth of the nonlinearity and to the unboundedness of the domain. The 
crucial property to be proved is that the convex cone $\mathcal K$ is compactly embedded in $L^p({A_R})$. 
Such a property is proved in Lemma \ref{le:continuous_embedding}, and in Propositions \ref{le:subcritical_compact_embedding} and \ref{prop:compact_embedding} by a rather delicate argument. The main new idea here is to combine the monotonicity property of functions in $\mathcal K$ with a compact embedding result due to Willem \cite{Willem} applied in suitably chosen subdomains of ${A_R}$.

In case the weight $a$ is a radial function, that is
\begin{equation}\label{P-rad}
   \begin{cases}
  -\Delta u + u = a(|x|) u^{p-1} \qquad &\mbox{in }{A_R},\\
u>0 &\mbox{in }{A_R},\\
u\in H^1_0({A_R}),
  \end{cases} 
\end{equation}
the whole problem has radial symmetry, and so the existence of a nontrivial radial solution for every $p\in (2,\infty)$ is a consequence of the Strauss lemma, cf. \cite{Strauss}.
Furthermore, if $p\in (2,2^*_{N-m+1})$, Theorem~\ref{thm:main_existence} provides a $\mathcal{K}$-ground state solution of \eqref{P-rad} for every $m \in \{2,\ldots,N-1\}$.
A priori it is not clear whether these solutions are radial or not and whether solutions obtained for different choices of $m$ coincide or not.
It is easy to prove that solutions given by different values of $m$ are distinct unless they are radial, see Lemma \ref{lem:m_m'}; moreover, in Proposition \ref{prop:non_radial} we show that a $\mathcal K$-ground state solution of \eqref{P-rad} is nonradial if $p$ belongs to the following range
\begin{equation}\label{eq:suff_p}
2+ \frac{2N}{\bigl(\frac{N-2}{2}\bigr)^2+ R^2} \le p < 2^*_{N-m+1}.
\end{equation}
We remark that such a sufficient condition coincides with the one found for the same problem on a bounded annulus, cf. \cite{BCNW2023}. 
Condition \eqref{eq:suff_p} can equivalently be read as an assumption on $R$, when $p$ is fixed. In particular, inverting the first inequality in \eqref{eq:suff_p}, we define
\begin{equation}\label{eq:R*}
R^*:=
\begin{cases}
\left[ \frac{2N}{p-2}-\left(\frac{N-2}{2}\right)^2 \right]^{1/2}
\quad & \text{if } p<2+\frac{8N}{(N-2)^2} \\
0 \quad & \text{otherwise}.
\end{cases}
\end{equation}
Recalling the definition of $2^*_n$ in \eqref{eq:critical_exponent}, our main multiplicity and symmetry breaking result for problem \eqref{P-rad} can be stated as follows.

\begin{theorem}\label{thm:main-a-rad}
Let $N\geq3$ and let $a$ be a radial function satisfying \eqref{a_assumptions}.
For every integer $2\leq n \leq N-1$, if $2<p<2^*_n$ and $R>R^*$,  problem \eqref{P-rad} has $n$ rotationally nonequivalent solutions, one of which is radial and the others $n-1$ are not. The same result holds also for $R=R^*$ if $R^*>0$. 
\end{theorem}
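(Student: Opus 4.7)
The plan is to combine three ingredients already available in the excerpt: the existence statement of Theorem~\ref{thm:main_existence} applied to several decompositions $\mathbb{R}^N = \mathbb{R}^m \times \mathbb{R}^{N-m}$, the nonradiality criterion of Proposition~\ref{prop:non_radial}, and the separation Lemma~\ref{lem:m_m'}. The radial solution is produced first, via the classical Strauss compactness of radially symmetric $H^1$-functions: one minimizes the associated Nehari functional restricted to the radial subspace, which works for every $p>2$ regardless of the supercritical growth, as remarked right before \eqref{eq:suff_p}.

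To produce the remaining $n-1$ solutions I would parametrize them by the integer $m$. Setting $k:=N-m+1$, the admissible range $m\in\{2,\ldots,N-1\}$ becomes $k\in\{2,\ldots,N-1\}$, and the condition $p<2^*_{N-m+1}=2^*_k$ required by Theorem~\ref{thm:main_existence} is monotone in $k$: since $2^*_2=\infty$ and $2^*_k$ is strictly decreasing for $k\geq 3$, the hypothesis $p<2^*_n$ is equivalent to $p<2^*_k$ for every $k\in\{2,\ldots,n\}$. This yields exactly $n-1$ admissible values $m\in\{N-n+1,\ldots,N-1\}$, and for each such $m$ Theorem~\ref{thm:main_existence}, applied with the $m$-dependent cone $\mathcal{K}$, produces a $\mathcal{K}$-ground state solution $u_m$ of \eqref{P-rad}.

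To ensure these solutions are nonradial, I would rewrite the assumption $R\geq R^*$ (strict, or with equality when $R^*>0$) as the sufficient condition \eqref{eq:suff_p}, using the algebraic equivalence $R^2\geq \frac{2N}{p-2}-\bigl(\frac{N-2}{2}\bigr)^2$ that defines $R^*$ in \eqref{eq:R*}, together with the already-verified inequality $p<2^*_{N-m+1}$. Proposition~\ref{prop:non_radial} then certifies that each $u_m$ is nonradial. Finally, Lemma~\ref{lem:m_m'} asserts that $\mathcal{K}$-ground states corresponding to different values of $m$ are rotationally nonequivalent unless they happen to be radial; since the previous step rules the latter out, the $u_m$'s are pairwise rotationally nonequivalent and distinct, modulo rotations, from the radial solution. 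Adjoining this radial solution produces $n$ rotationally nonequivalent solutions.

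I do not anticipate any serious obstacle: the proof is essentially a bookkeeping assembly of the three preliminary results. The only subtle points are the correct inversion of the inequality defining $R^*$ and the use of the strict monotonicity of $k\mapsto 2^*_k$ to match the range of admissible $m$ with the count $n-1$ in the theorem statement; both are elementary once carefully stated.
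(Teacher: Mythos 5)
Your proposal follows essentially the same route as the paper: Strauss-type compactness for the radial solution, Theorem~\ref{thm:main_existence} applied with each admissible choice of $m$ using the strict monotonicity of $k\mapsto 2^*_k$ to count $n-1$ such choices, the equivalence of $R\geq R^*$ with \eqref{eq:suff_p} together with Proposition~\ref{prop:non_radial} to ensure nonradiality, and Lemma~\ref{lem:m_m'} to separate the solutions. This matches the paper's argument step by step.
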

We remark that, similarly as in \cite{BCNW2023}, it can be seen that the nonradial solutions obtained in the previous theorem are not foliated Schwarz smmetric, so they have high Morse index, $i> N$, by \cite[Theorem 1.3]{gladiali-pacella-weth-2010}.

As a particular case of Theorem~\ref{thm:main-a-rad},  we exhibit conditions on $p$ and $n$ that ensure the existence of $n$ distinct solutions of \eqref{P-rad} on any unbounded annulus of inner radius $R>0$.
In view of \eqref{eq:R*},  this amounts to ensure that $R^*=0$ and that the interval 
\[
\left(2+\frac{8N}{(N-2)^2} , 2^*_n \right)
\]
is non-empty.

\begin{corollary}\label{coro:multiplicity}
Let $R>0$, and let $a$ be a radial function satisfying \eqref{a_assumptions}.
\begin{itemize}
\item[(i)] Let $N\geq 3$. For
\begin{equation}\label{eq:coro1}
p\geq 2+\frac{8N}{(N-2)^2} 
\end{equation}
problem \eqref{P-rad} has $2$ distinct solutions, one of which is radial and the other is not.
\item[(ii)] Let $N\geq 6$ and let $n\geq3$ be an integer.  For 
\begin{equation}\label{eq:coro2}
n\leq \frac{N}{2} \quad\text{and}\quad
2+\frac{8N}{(N-2)^2} \leq p <2^*_n
\end{equation}
problem \eqref{P-rad} has $n$ rotationally nonequivalent solutions,  one of which is radial and the others are not.
\end{itemize}
\end{corollary}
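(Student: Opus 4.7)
The proof reduces to a direct verification that the hypotheses of Theorem~\ref{thm:main-a-rad} are satisfied for the appropriate values of $n$, $p$ and $R$. The key observation is that, by the very definition \eqref{eq:R*}, the threshold $R^*$ vanishes precisely when $p\geq 2+\frac{8N}{(N-2)^2}$. Hence under either \eqref{eq:coro1} or \eqref{eq:coro2} the condition ``$R>R^*$, or $R=R^*$ with $R^*>0$'' from Theorem~\ref{thm:main-a-rad} is automatic for every $R>0$.

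For part (i), I would apply Theorem~\ref{thm:main-a-rad} with $n=2$. By the convention in \eqref{eq:critical_exponent} we have $2^*_2=\infty$, so the upper bound $p<2^*_n$ is satisfied as soon as $p>2$, which is implied by \eqref{eq:coro1}. Combined with $R^*=0<R$, Theorem~\ref{thm:main-a-rad} yields $2$ rotationally nonequivalent solutions, one radial and the other not, as claimed.

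For part (ii), I would apply Theorem~\ref{thm:main-a-rad} with the integer $n$ from \eqref{eq:coro2}. Since $N\geq 6$ and $3\leq n\leq N/2\leq N-1$, such $n$ is admissible in Theorem~\ref{thm:main-a-rad}. Again $R^*=0$ by \eqref{eq:coro2}, and $p<2^*_n$ is assumed. The only nontrivial point is to check that the interval $\bigl[2+\frac{8N}{(N-2)^2},\,2^*_n\bigr)$ appearing in \eqref{eq:coro2} is nonempty, i.e.\ that $2^*_n>2+\frac{8N}{(N-2)^2}$. Writing $2^*_n=2+\frac{4}{n-2}$, this is equivalent to $n-2<\frac{(N-2)^2}{2N}$. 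Expanding, $\frac{(N-2)^2}{2N}=\frac{N}{2}-2+\frac{2}{N}>\frac{N}{2}-2\geq n-2$, where the last inequality uses $n\leq N/2$. This nonemptiness, together with the two conditions on $R$ and $p$, completes the verification.

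I do not foresee any real obstacle: the statement is a corollary in the strict sense, and the proof is an unpacking of the formula for $R^*$ together with the elementary inequality $\frac{(N-2)^2}{2N}>\frac{N}{2}-2$ that converts $n\leq N/2$ into $2^*_n>2+\frac{8N}{(N-2)^2}$.
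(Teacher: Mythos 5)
Your proof is correct and follows essentially the same route as the paper: in part (i) you apply Theorem~\ref{thm:main-a-rad} with $n=2$ using $2^*_2=\infty$ and $R^*=0$, and in part (ii) you reduce the nonemptiness of $\bigl[2+\tfrac{8N}{(N-2)^2},2^*_n\bigr)$ to the same algebraic inequality $n-2<\tfrac{(N-2)^2}{2N}$ that the paper states as $n<\tfrac{N}{2}+\tfrac{2}{N}$ (equivalently $n\le N/2$ for integers), with $N\ge 6$ making $n=3$ admissible. The computation via $2^*_n=2+\tfrac{4}{n-2}$ and the bound $\tfrac{(N-2)^2}{2N}>\tfrac{N}{2}-2$ is just a slightly more explicit rewriting of the paper's step.
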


Finally, we wish to add an existence result on more general nonradial exterior domains which are still symmetric with respect to rotations in $O(m) \times O(N-m)$. More precisely, for given $m \in \{2,\dots,N-1\}$ and a $C^2$-function $g:[0,\infty) \to \R$ with 
\begin{equation}
  \label{eq:assumption-g}
g(0)< 0 \qquad \text{and}\qquad 0 < \inf g' \le \sup g' \le1,
\end{equation}
we now consider the exterior domain
\begin{equation}
  \label{eq:def-A_g}
A_g:= \{x \in \R^N\::\: x_1^2+ \dots + x_m^2 + g(x_{m+1}^2 + \cdots + x_N^2) >0\}.
\end{equation}
The class of domains given by (\ref{eq:def-A_g}) can be seen as an unbounded variant of the class of {\em bounded domains of double revolution with monotonicity} as introduced in \cite{CowanMoameni2022JDE}. In particular, we note that, in the special case $g(\tau)=\kappa \tau - c$ with $0< \kappa \le 1$ and $c >0$, the set $A_g$ is the complement of an $O(m) \times O(N-m)$-symmetric ellipsoid in $\R^N$.   

In the following, we let $\mathcal C_g$ and $\mathcal K_g$ be given as in (\ref{def-cal-C}) and (\ref{cone}) with $A$ replaced by $A_g$. Moreover, we define $\mathcal N_{\mathcal K_g}:= \{u \in \mathcal{K}_g\::\: u \not\equiv 0, \,I'(u)u=0\}$. We then have the following existence result in nonradial exterior domains.

\begin{theorem}\label{thm:main_existence-nonradial}
  Let $N\geq3$, let $g \in C^2([0,\infty))$ satisfy (\ref{eq:assumption-g}), let $a$ satisfy \eqref{a_assumptions} with $A$ replaced by $A_g$, and let $p\in(2,2^*_{N-m+1})$. Then there exists a nontrivial solution $u$ of the problem
\begin{equation}\label{P_g}
   \begin{cases}
  -\Delta u + u = a(x)u^{p-1} \qquad &\mbox{in }A_g\\
u>0 &\mbox{in }A_g\\
u\in H^1_0(A_g) &
  \end{cases}
\end{equation}
such that 
\begin{equation}\label{eq:K-ground-state-g}
u\in\mathcal N_{\mathcal K_g}\quad\mbox{and}\quad I(u)=c_I>0.
\end{equation}
\end{theorem}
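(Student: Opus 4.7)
The plan is to follow, step by step, the variational scheme used to prove Theorem~\ref{thm:main_existence}, checking that every ingredient extends from $A_R$ to $A_g$, and then invoke the Cowan--Moameni abstract theorem recalled in Section~\ref{subsec:abstract_theorem}. The hypotheses (\ref{eq:assumption-g}) on $g$ are designed precisely to make this extension possible. Writing $x=(x^{(1)},x^{(2)})$ with $x^{(1)}\in\R^m$ and $x^{(2)}\in\R^{N-m}$, so that $|x^{(1)}|=r\cos\theta$ and $|x^{(2)}|=r\sin\theta$, the defining function $f(x):=|x^{(1)}|^2+g(|x^{(2)}|^2)$ of $A_g$ satisfies, at fixed $r$,
\[
\partial_\theta f = 2r^2\sin\theta\cos\theta\,\bigl(g'(r^2\sin^2\theta)-1\bigr)\le 0,
\]
by $\sup g'\le 1$; hence the $\theta$-sections of $A_g$ at each fixed $r$ are intervals of the form $[0,\theta_{\max}(r))$, which is the geometric counterpart of the monotonicity constraint defining $\mathcal C_g$. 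Moreover, $g(0)<0$ combined with $\inf g'>0$ ensures that $\R^N\setminus A_g$ is a bounded $O(m)\times O(N-m)$-symmetric set, so that $A_g$ is genuinely an exterior domain.

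The proof that $\mathcal K_g$ is a closed convex cone in $H^1_0(A_g)$ is then an immediate transcription of Lemma~\ref{le:Kcone}, since that argument only uses the $O(m)\times O(N-m)$-invariance of the domain together with the linearity of the constraint $\mathfrak u_\theta\le 0$.

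The main obstacle is to establish the continuous and compact embedding $\mathcal K_g\hookrightarrow L^p(A_g)$ for $p\in(2,2^*_{N-m+1})$, i.e., the analogue of Lemma~\ref{le:continuous_embedding} and Proposition~\ref{prop:compact_embedding}. I would fix $R_0>0$ so large that $\R^N\setminus A_g\subset B_{R_0}$, and split the estimate over $A_{R_0}\subset A_g$ and $A_g\cap B_{R_0}$. On the former set, the restriction of any $u\in\mathcal K_g$ is an element of $H^1(A_{R_0})\cap\mathcal C$, so a mild variant of Proposition~\ref{prop:compact_embedding} (whose proof does not in fact use the vanishing trace on $\partial A_R$) applies. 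On the bounded piece $A_g\cap B_{R_0}$, the monotonicity $\mathfrak u_\theta\le 0$ bounds $u(r,\theta)$ by its values near $\theta=0$, and the fact that the $\theta$-sections are intervals $[0,\theta_{\max}(r))$ allows the $(N-m+1)$-effective-dimensional Willem-type reduction used in the proof of Proposition~\ref{prop:compact_embedding} to be carried out uniformly in $r$, yielding compactness for all $p<2^*_{N-m+1}$.

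Once the compact embedding is in hand, the remaining steps are routine adaptations of Theorem~\ref{thm:main_existence}: $I$ is $C^2$ on $H^1_0(A_g)\cap L^p(A_g)$, the Nehari set $\mathcal N_{\mathcal K_g}$ is nonempty by standard scaling, and the Cowan--Moameni theorem produces $u\in\mathcal N_{\mathcal K_g}$ with $I(u)=c_I>0$. Strict positivity $u>0$ in $A_g$ follows from the strong maximum principle applied to $-\Delta u+u=a(x)u^{p-1}\ge 0$.
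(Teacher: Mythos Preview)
Your compactness argument is more complicated than necessary, and there is a genuine gap in the invariance step.

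\textbf{Compactness.} The paper does not split $A_g$ into a bounded piece and an exterior annulus. Instead, since $g(0)<0$ implies that a small ball $B_R(0)$ lies in $\R^N\setminus A_g$, one has $A_g\subset A_R$. Extension by zero then gives $H^1_0(A_g)\subset H^1_0(A_R)$ and hence $\mathcal K_g\subset\mathcal K$, so Lemma~\ref{le:continuous_embedding} and Proposition~\ref{prop:compact_embedding} apply verbatim. Your route via a large radius $R_0$ with $A_{R_0}\subset A_g$ forces you to run the Willem argument for functions that are merely in $H^1(A_{R_0})$, not $H^1_0(A_{R_0})$; this is not what Proposition~\ref{prop:compact_embedding} (via \cite[Theorem~1.24]{Willem}) actually delivers, so the claim that ``the proof does not in fact use the vanishing trace'' would need justification.

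\textbf{Pointwise invariance.} This is where the real content lies, and you pass over it as ``routine''. In the proof of Lemma~\ref{le:invarianceK} one must show that the solution $v$ of $-\Delta v+v=h$ satisfies $\mathfrak v_\theta\le 0$, and for this a maximum-principle argument is applied to $v_\theta$ on the set $\tilde A$. In the radial case $v_\theta=0$ on $\partial A$ because $\partial_\theta$ is tangential to the sphere. On $\partial A_g$ this is no longer true: one only gets $v_\theta\le 0$ there, and establishing even this inequality requires precisely the hypothesis $\sup g'\le 1$. In fact your own computation $\partial_\theta f\le 0$ is exactly the right ingredient, but in the wrong place: it says that $\partial_\theta x$ has nonpositive component along the outward normal $\nabla f$ on $\partial A_g$, so for any $C^1$ function $v$ vanishing on $\partial A_g$ and nonnegative inside (hence $\nabla v = c\,\nabla f$ with $c\ge 0$ on the boundary) one obtains $v_\theta=\nabla v\cdot\partial_\theta x=c\,\partial_\theta f\le 0$. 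This is the paper's Lemma~\ref{positivity-boundary}, and it is what makes the invariance proof go through for $A_g$. Without this step, hypothesis~(ii) of Theorem~\ref{thm:thm2.1CM} is unverified and the Cowan--Moameni machinery cannot be invoked.
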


The paper is organized as follows. In Section \ref{sec-existence} we prove the compactness and the pointwise invariance property needed to apply the abstract theorem of \cite{CowanMoameni2022}; the section ends with the proof of the existence of a $\mathcal K$-ground state solution of problem \eqref{P} as stated in Theorem \ref{thm:main_existence}. Section \ref{sec:case-a-radial} is devoted to the radial problem \eqref{P-rad} and contains the proof of Theorem \ref{thm:main-a-rad} that guarantees the existence of multiple nonradial, rotationally nonequivalent solutions. Finally, in Section~\ref{sec:exist-nonr-exter} we sketch the proof of Theorem~\ref{thm:main_existence-nonradial}, which follows by very similar arguments as Theorem \ref{thm:main_existence} with rather straightforward modifications. 

In order to keep the notation simple, in Sections \ref{sec-existence} and \ref{sec:case-a-radial} we shall write $A$ in place of $A_R$, so we fix $R>0$ and set 
$$
{A} := \{x\in\mathbb{R}^N:\, |x|>{R}\}.
$$

\section{Proof of Theorem \ref{thm:main_existence}}\label{sec-existence}

\subsection{Abstract result}\label{subsec:abstract_theorem}

As already mentioned in the Introduction, our existence result relies on \cite[Theorem 2.1]{CowanMoameni2022}, which we report here in a version that has been sightly adapted to our case.

\begin{definition}\label{def:convex-cone}
Let $V$ be a linear space.
We say that $K\subset V$ is a convex cone if the following properties hold 
\begin{itemize}
\item[(i)] if $u\in K$ and $\lambda\ge0$, then $\lambda u \in K$;
\item[(ii)] if $u,\, -u\in K$, then $u=0$;
\item[(iii)] if $u,\, v\in K$, then $u+v\in K$.
\end{itemize}
\end{definition}

\begin{theorem}[{cf. \cite[Theorem 2.1]{CowanMoameni2022}}]\label{thm:thm2.1CM}
Let $\Omega$ be a domain in $\mathbb R^N$, $p>2$, and let $w \in L^\infty(\Omega)$ be a nonnegative function. Consider the problem 
\begin{equation}\label{eq:P-CM}
\begin{cases}
-\Delta u+u=w(x)|u|^{p-2}u\quad&\mbox{in }\Omega,\\
u\in H^1_0(\Omega),
\end{cases}
\end{equation}
and its formal Euler-Lagrange functional
\begin{equation}\label{eq:J_def}
J(u)=\frac{1}{2}\int_\Omega(|\nabla u|^2+u^2)dx-\frac{1}{p}\int_\Omega w|u|^p dx.
\end{equation}
Let $K\subset H^1_0(\Omega)$ be a closed convex cone. Suppose that the following three assumptions hold: 
\begin{itemize}
\item[(i)] \emph{Compactness:} $K \subset L^p(\Omega)$, and this embedding is compact. In other words: If $(u_n)_n$ is a bounded sequence in $K$ (w.r.t. the $H^1$-norm), then $(u_n)_n$ has a convergent subsequence in $L^p(\Omega)$.
\item[(ii)] \emph{Pointwise invariance property:} For each $u\in K$ there exists $v\in K$ that is a weak solution of the equation
  \[-\Delta v + v=w(x)|u|^{p-2}u.\]
\item[(iii)] There exists $e \in K \setminus \{0\}$ with $J(e) \le 0$.  
\end{itemize}
Then there exists a nontrivial weak solution $u\in K$ of \eqref{eq:P-CM} such that 
\[
J(u)=\inf_{\gamma\in\Gamma}\sup_{t\in[0,1]}J(\gamma(t))>0,
\]
with $\Gamma:=\{\gamma\in C([0,1]; K)\,:\gamma(0)=0	\neq\gamma(1),\,J(\gamma(1))\le 0\}$.
\end{theorem}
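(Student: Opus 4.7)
The plan is to apply Szulkin's mountain pass theorem for non-smooth functionals \cite{S} to the functional $\Phi := J + I_K : H^1_0(\Omega) \to \R \cup \{+\infty\}$, where $I_K$ denotes the convex indicator of the cone $K$ (equal to $0$ on $K$ and $+\infty$ otherwise). Since $K$ is a closed convex subset of $H^1_0(\Omega)$, $I_K$ is proper, convex, and lower semicontinuous, so $\Phi$ is the sum of a $C^1$-functional and a convex l.s.c.\ one---exactly a Szulkin functional. A critical point of $\Phi$ in the Szulkin sense is characterized as a $u \in K$ satisfying the variational inequality
\[
\langle J'(u), v - u \rangle \ge 0 \qquad \text{for every } v \in K.
\]

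Next I would verify the hypotheses of Szulkin's mountain pass theorem for paths in $\Gamma$. For the geometry, the continuous embedding $K \hookrightarrow L^p(\Omega)$ (implied by (i)) gives $J(u) \ge \tfrac{1}{2}\|u\|_{H^1}^2 - C\|u\|_{H^1}^p$ for $u \in K$, so $J$ is bounded below by a positive constant on a small sphere of $K$ since $p > 2$; hypothesis (iii) supplies an endpoint $e \in K \setminus \{0\}$ with $J(e) \le 0$, and the ray $t \mapsto te$ stays in $K$ by the cone property, providing an admissible path in $\Gamma$. For the Szulkin Palais--Smale condition, one starts from a sequence $(u_n) \subset K$ with $J(u_n) \to c$ and $\langle J'(u_n), v - u_n \rangle \ge -\varepsilon_n \|v - u_n\|_{H^1}$ for every $v \in K$, with $\varepsilon_n \to 0^+$. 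The choices $v = 2u_n$ and $v = 0$ (both admissible since $K$ is a cone) yield $|\langle J'(u_n), u_n \rangle| \le \varepsilon_n \|u_n\|_{H^1}$, which combined with the energy bound and $p > 2$ forces boundedness in $H^1_0(\Omega)$. Extracting a subsequence, weak closedness of $K$ gives $u_n \rightharpoonup u \in K$ in $H^1_0(\Omega)$, and (i) gives $u_n \to u$ in $L^p(\Omega)$; choosing then $v = u$ in the PS inequality and using weak convergence of the linear terms together with $L^p$-convergence of the nonlinear term yields $\limsup_n \|u_n\|_{H^1}^2 \le \|u\|_{H^1}^2$, hence strong convergence in $H^1_0(\Omega)$.

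Szulkin's mountain pass theorem then produces a critical point $u \in K$ with $J(u) = c > 0$, so $u \neq 0$. The remaining step is to upgrade the variational inequality on $K$ to a genuine weak solution of \eqref{eq:P-CM}, and this is precisely where the pointwise invariance property (ii) enters. Let $\tilde v \in K$ satisfy $-\Delta \tilde v + \tilde v = w|u|^{p-2}u$ weakly. Testing this equation against $u$ and $\tilde v$ gives $\int_\Omega (\nabla u \cdot \nabla \tilde v + u\tilde v)\,dx = \int_\Omega w|u|^p\,dx$ and $\|\tilde v\|_{H^1}^2 = \int_\Omega w|u|^{p-2}u\tilde v\,dx$. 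The cone-choices $v = 2u$ and $v = 0$ in the variational inequality at $u$ force $\langle J'(u), u \rangle = 0$, i.e.\ $\|u\|_{H^1}^2 = \int_\Omega w|u|^p\,dx$. Taking finally $v = \tilde v \in K$ in the variational inequality and combining all three identities delivers, after short algebra, $\|u - \tilde v\|_{H^1}^2 \le 0$, whence $u = \tilde v$ and $u$ is a nontrivial weak solution of \eqref{eq:P-CM} at the prescribed min-max level.

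I expect the main obstacle to be precisely this last identification $u = \tilde v$: a Szulkin critical point in a convex cone only satisfies a one-sided variational inequality, and the pointwise invariance (ii) is what converts it into an equality. The step is delicate because it simultaneously exploits the Euler identity $\langle J'(u), u \rangle = 0$, the weak formulation for $\tilde v$ tested against both $u$ and $\tilde v$, and the variational inequality with the specific test function $\tilde v$, all three of which must align for the norm comparison $\|u\|_{H^1} \ge \|\tilde v\|_{H^1}$ to combine with $\|u-\tilde v\|_{H^1}^2 = \|\tilde v\|_{H^1}^2 - \|u\|_{H^1}^2$ to collapse.
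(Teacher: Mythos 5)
Your proposal is correct and follows the same overall strategy as the paper (Szulkin's non-smooth mountain pass theorem applied to a functional constrained to the cone $K$, with compactness from (i) and identification of the constrained critical point with a genuine solution via the pointwise invariance (ii)). There are, however, two worthwhile differences of execution. First, you use the decomposition $J + I_K$ with the whole smooth functional $J$ kept in the $C^1$ part and only the indicator of $K$ as the convex part, whereas the paper (following \cite{CowanMoameni2022}) absorbs the quadratic term into the convex part, writing $J = \Psi - \Phi$ with $\Psi(u) = \tfrac12\|u\|^2_{H^1}$ on $K$ and $+\infty$ off $K$. Your choice yields the cleaner PS inequality $\langle J'(u_n), v - u_n\rangle \ge -\varepsilon_n\|v-u_n\|$, which lets you extract boundedness directly by testing with $v=2u_n$ and $v=0$ (the cone allows both) to obtain $|\langle J'(u_n),u_n\rangle|\le\varepsilon_n\|u_n\|$ and then run the standard Ambrosetti--Rabinowitz estimate; the paper's decomposition, being one-sided in $\Psi$, requires the more delicate multiplier trick with $v=\beta u_n$, $\beta\in(1,p-1)$, and a coefficient $\alpha=\tfrac{1}{p(\beta-1)}$. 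Both are valid, and yours is arguably more elementary. Second, the paper's appendix only supplies the boundedness argument, deferring the rest to \cite{CowanMoameni2022}; your sketch also spells out the identification step, and the algebra there (combining the Euler identity $\|u\|^2 = \int_\Omega w|u|^p$, the weak formulation for $\tilde v$ tested against $u$ and against $\tilde v$, and the variational inequality with $v = \tilde v$, to collapse $\|u-\tilde v\|^2_{H^1}=\|\tilde v\|^2_{H^1}-\|u\|^2_{H^1}\le 0$) is the correct and standard mechanism. One small imprecision: for Szulkin's framework to apply in the supercritical regime, the smooth part $J$ must be $C^1$ on the underlying Banach space, so you should work in $V = H^1_0(\Omega)\cap L^p(\Omega)$ with the norm $\|\cdot\|_{H^1}+\|\cdot\|_{L^p}$ (as the paper does), rather than literally on $H^1_0(\Omega)$; since (i) makes the two norms equivalent on $K$, this does not affect any of your estimates, but it is needed for the framework to be well-posed.
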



The variational characterization given in \cite{CowanMoameni2022} is expressed in a slightly different way, but coincides with the one given above. We remark that in our statement the set $K$ is required to be a cone, since we believe that this assumption is needed for the proof of the Palais-Smale condition. We provide such proof in Appendix \ref{sec:appendix}.

We shall apply the previous abstract theorem with $\Omega=A$, $w(x)=a(x)$, and $K=\mathcal{K}$. In the remaining part of this section we prove that the assumptions of Theorem \ref{thm:thm2.1CM} are satisfied in our case and conclude the proof of Theorem \ref{thm:main_existence}.

\subsection{Some useful change of variable formulas}\label{subsec:chofvar}

Throughout the paper, we shall deal with functions of the type
$u(x) = \mathfrak{u}\left(r,\theta \right)$ with $r = \vert x \vert$ and $\theta$ as in \eqref{eq:theta}. 
Notice that these functions can also be written in terms of the cartesian coordinates in $\mathbb R^2$ 
\begin{equation}\label{eq:s-t}
s:=r\cos\theta=\sqrt{x_1^2+\ldots+x_m^2}, \quad
t:=r\sin\theta=\sqrt{x_{m+1}^2+\ldots+x_N^2}, 
\end{equation}
with $(s,t) \in Q:= \{(s,t)\in\R^2:\, s^2+t^2>R^2, \, s\geq0, \, t\geq0\}$. 
Hence, in the following we shall write
\[u(x)=\mathfrak{u}(r,\theta)=\tilde{\mathfrak{u}}(s,t).\]
For the reader's convenience, we collect below some useful formulas which will be often used throughout the paper.

At first we observe that, by a change of variables in the integral,
\begin{equation}\label{eq:changeintegral}
\begin{aligned}
\int_A u(x)\, dx & = \omega_{N,m} \int_Q \tilde{\mathfrak{u}}(s,t) s^{m-1} t^{N-m-1}\,ds \,dt \\
& = \omega_{N,m} \int_{R}^\infty \int_0^{\frac{\pi}{2}} \mathfrak{u}(r,\theta) \mu(\theta) r^{N-1} \,dr\,d\theta,
\end{aligned}
\end{equation}
where $\omega_{N,m}:=\omega_{m-1}\omega_{N-m-1}$, with $\omega_k$ denoting the surface measure of the sphere $\mathbb{S}^k \subseteq \R^{k+1}$, with the convention that $\omega_0=2$, and 
\begin{equation}\label{eq:mu}
\mu(\theta):=(\cos\theta)^{m-1}(\sin\theta)^{N-m-1}.
\end{equation}

Second, we provide some formal expressions for the derivatives. To this end, we notice that 
\begin{equation}\label{eq:gradient-s-t}
u_{x_i}=
\begin{cases}
\displaystyle \frac{x_i}{s} \tilde{\mathfrak{u}}_s & \qquad i=1,\ldots,m \vspace{0.2cm}\\
\displaystyle \frac{x_i}{t} \tilde{\mathfrak{u}}_t & \qquad i=m+1,\ldots,N
\end{cases}
\end{equation}
and that, by passing to polar coordinates,
\[
\tilde{\mathfrak{u}}_s=\cos\theta\, \mathfrak{u}_r - \frac{\sin\theta}{r}\mathfrak{u}_\theta, \qquad 
\tilde{\mathfrak{u}}_t=\sin\theta\, \mathfrak{u}_r + \frac{\cos\theta}{r}\mathfrak{u}_\theta.
\]
Then,
\begin{equation}\label{eq:gradient-r-theta}
\vert \nabla u \vert^2 = \tilde{\mathfrak{u}}_s^2 + \tilde{\mathfrak{u}}_t^2 = \mathfrak{u}_r^2 + \frac{\mathfrak{u}_\theta^2}{r^2}.
\end{equation}
Moreover, we have
\[
u_{x_ix_i}=
\begin{cases}
\displaystyle \frac{x_i^2}{s^2}\tilde{\mathfrak{u}}_{ss}+\frac{1}{s}\left(1-\frac{x_i^2}{s^2}\right)\tilde{\mathfrak{u}}_s & \qquad i=1,\ldots,m \vspace{0.2cm}\\
\displaystyle \frac{x_i^2}{t^2}\tilde{\mathfrak{u}}_{tt}+\frac{1}{t}\left(1-\frac{x_i^2}{t^2}\right)\tilde{\mathfrak{u}}_t & \qquad i=m+1,\ldots,N.
\end{cases}
\]
Using the well known formula for the Laplacian in polar coordinates, namely
\[
\tilde{\mathfrak{u}}_{ss}+\tilde{\mathfrak{u}}_{tt}=\mathfrak{u}_{rr}+\frac{1}{r}\mathfrak{u}_r+\frac{1}{r^2}\mathfrak{u}_{\theta\theta},
\]
we finally find
\begin{equation}\label{eq:Deltau_s-r-th}
\begin{aligned}
\Delta u&=\tilde{\mathfrak{u}}_{ss}+\tilde{\mathfrak{u}}_{tt}+\frac{m-1}{s}\tilde{\mathfrak{u}}_s+\frac{N-m-1}{t}\tilde{\mathfrak{u}}_t
\\& = \mathfrak{u}_{rr}+ \frac{N-1}{r}\mathfrak{u}_r-\frac{(m-1)\tan\theta - (N-m-1)\cot\theta}{r^2}\mathfrak{u}_\theta +\frac{1}{r^2} \mathfrak{u}_{\theta\theta}.
\end{aligned}
\end{equation}

\subsection{Closedness of $\mathcal K$ and compactness}

We prove that the convex cone $\mathcal{K}$ defined in \eqref{cone} is closed. 
\begin{lemma}\label{le:Kcone}
The convex cone $\mathcal K$ is closed with respect to the $H^1(A)$-norm; as a consequence, it is weakly closed. 
\end{lemma}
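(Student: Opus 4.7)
The plan is to take a sequence $(u_n)\subset\mathcal K$ with $u_n\to u$ in $H^1(A)$ and verify, one at a time, each of the defining properties of $\mathcal K$. Since $H^1_0(A)$ is a closed subspace of $H^1(A)$, we immediately obtain $u\in H^1_0(A)$. Passing to a subsequence (not relabeled) converging pointwise a.e.\ in $A$, we get $u\ge 0$ a.e., and the symmetry $u_n(x)=\mathfrak u_n(r,\theta)$ passes to the pointwise limit, so $u(x)=\mathfrak u(r,\theta)$ for a suitable $\mathfrak u$. Since $u\in H^1(A)\subset W^{1,1}_{\mathrm{loc}}(A)$, the corresponding $\mathfrak u$ automatically lies in $W^{1,1}_{\mathrm{loc}}((R,\infty)\times(0,\pi/2))$, as stated just after the definition of $\mathcal C$.

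The substantive point is to verify $\mathfrak u_\theta\le 0$ a.e. For this I would combine the change of variables \eqref{eq:changeintegral} with the identity \eqref{eq:gradient-r-theta}, namely $|\nabla u|^2=\mathfrak u_r^2+r^{-2}\mathfrak u_\theta^2$. These turn $H^1$-convergence of $u_n$ into $L^2$-convergence of $\mathfrak u_{n,r}$ to $\mathfrak u_r$ and of $r^{-1}\mathfrak u_{n,\theta}$ to $r^{-1}\mathfrak u_\theta$ on $(R,\infty)\times(0,\pi/2)$ with respect to the measure $\mu(\theta)r^{N-1}\,dr\,d\theta$. On any rectangle $[r_1,r_2]\times[\theta_1,\theta_2]$ compactly contained in $(R,\infty)\times(0,\pi/2)$ the weight $\mu(\theta)r^{N-1}$ is bounded above and below by positive constants, so this yields unweighted $L^2$-convergence $\mathfrak u_{n,\theta}\to\mathfrak u_\theta$ on such rectangles. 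Extracting a further a.e.-convergent subsequence, the pointwise inequalities $\mathfrak u_{n,\theta}\le 0$ pass to the limit, giving $\mathfrak u_\theta\le 0$ a.e.\ on the rectangle, and exhausting $(R,\infty)\times(0,\pi/2)$ by such rectangles yields $\mathfrak u_\theta\le 0$ a.e., completing the strong closedness.

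For the weak closedness, once strong closedness is established it is a direct consequence of Mazur's theorem: $\mathcal K$ is convex (as the intersection of the convex cone $\mathcal C$ with the subspace $H^1_0(A)$) and strongly closed in the Hilbert space $H^1(A)$, hence weakly closed.

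The only mildly delicate step is the translation of $H^1$-convergence into $L^2$-convergence of $\mathfrak u_{n,\theta}$ on compact subrectangles of $(R,\infty)\times(0,\pi/2)$; the rest is routine given the change of variables and a.e.\ convergence along a subsequence.
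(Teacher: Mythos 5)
Your proof is correct and follows essentially the same route as the paper's: pass the nonnegativity and symmetry of $u_n$ to the limit along an a.e.-convergent subsequence, verify $\mathfrak u_\theta\le 0$ by extracting a.e. convergence of $\mathfrak u_{n,\theta}$ from the $L^2$-convergence of $\nabla u_n$, and then invoke Mazur's theorem for weak closedness. The only cosmetic difference is that the paper gets the a.e. convergence of $\mathfrak u_{n,\theta}$ directly from the chain-rule identity $\mathfrak u_{n,\theta}=\nabla u_n\cdot\partial x/\partial\theta$ and a.e. convergence of $\nabla u_n$ (after subsequence), whereas you argue through the polar change of variables and localization to compact rectangles — both are valid and amount to the same idea.
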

\begin{proof}
Let $(u_n)_{n}\subset\mathcal K$ and $u\in H^1_0(A)$ be such that $u_n\to u$ in $H^1(A)$ as $n\to\infty$. 
By pointwise almost everywhere convergence up to a subsequence,  $u= \mathfrak{u}\left(r,\theta \right)$ is nonnegative and even with respect to $\theta$. 
Let us check that $\mathfrak{u}_\theta\le 0$. By \cite[Proposition 9.6]{brezis2010}, we can write
\[
0\geq \frac{\partial\mathfrak{u}_n}{\partial\theta} = \nabla u_n \cdot \frac{\partial x}{\partial \theta} \to \nabla u \cdot \frac{\partial x}{\partial \theta} =\mathfrak{u}_\theta
\]
almost everywhere, as $n\to\infty$. 
Then, $u \in \mathcal K$, proving that $\mathcal K$ is closed in the strong $H^1$-topology. Being $\mathcal K$ convex, we conclude that $\mathcal K$ is weakly closed, as well.
\end{proof}

We now prove the fundamental property that $\mathcal K$ is continuously embedded in $L^q(A)$, for exponents smaller than the Sobolev critical one in dimension $N-m+1$.

\begin{lemma}\label{le:continuous_embedding}
Let $m\in \{2,\ldots,N-2\}$, then for every $q\in [2,2^*_{N-m+1}]$ there exists 
a positive constant $C(q)$ such that 
\begin{equation}\label{eq:est3}
\|u\|_{L^q(A)}\leq C(q) \|u\|_{H^1(A)} \quad\mbox{for every }u\in \mathcal K.
\end{equation}
If $m=N-1$ then \eqref{eq:est3} holds for every $q\in [2,\infty)$.
\end{lemma}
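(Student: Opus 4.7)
The plan is to transfer the estimate into the $(r,\theta)$-coordinates of Section~\ref{subsec:chofvar}. By \eqref{eq:changeintegral} and \eqref{eq:gradient-r-theta} we have
\[
\|u\|_{L^q(A)}^q = \omega_{N,m}\int_R^\infty\!\!\int_0^{\pi/2}\!\mathfrak u^q\,\mu(\theta)\,r^{N-1}\,d\theta\,dr,
\]
\[
\|u\|_{H^1(A)}^2 = \omega_{N,m}\int_R^\infty\!\!\int_0^{\pi/2}\!\Bigl(\mathfrak u_r^2+\tfrac{\mathfrak u_\theta^2}{r^2}+\mathfrak u^2\Bigr)\mu(\theta)\,r^{N-1}\,d\theta\,dr,
\]
with $\mu$ as in \eqref{eq:mu}. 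Since $2^*_{N-m+1}\ge 2^*_N$ for $m\ge 2$, the target exponent is (generally strictly) larger than the ambient Sobolev critical exponent, and the monotonicity $\mathfrak u_\theta\le 0$ has to play a substantive role.

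My strategy is to exploit the $O(m)$-part of the symmetry to view $u$ as a function on $\mathbb{R}_+\times\mathbb{R}^{N-m}$ parametrised by $(s,x'')$ with $s=|x'|$; this reduction leaves $N-m+1$ ``free'' coordinates, and standard Sobolev in that dimension has critical exponent $2^*_{N-m+1}$. I would then split $A$ at a fixed threshold, for instance into $A^{(1)}=\{x\in A:s=|x'|\ge 1\}$ and $A^{(2)}=A\setminus A^{(1)}$. On $A^{(1)}$, the weight $s^{m-1}$ arising from the $O(m)$-reduction is bounded below by $1$, so the reduced weighted $H^1$-norm dominates the unweighted one, and combining with Willem's embedding for cylindrically symmetric functions (applied to the $O(N-m)$-symmetry in $x''$) yields the $L^q$-bound for $q\le 2^*_{N-m+1}$. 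On $A^{(2)}$, where $|x'|$ is small, the angle $\theta$ is close to $\pi/2$ whenever $r$ is not small, and the monotonicity $\mathfrak u_\theta\le 0$ gives $\mathfrak u(r,\theta)\le \mathfrak u(r,\theta_0)$ for any fixed $\theta_0<\pi/2$; this reduces the contribution of $A^{(2)}$ to an integral involving a Strauss-type radial profile $r\mapsto\mathfrak u(r,\theta_0)$, which is controlled by $\|u\|_{H^1(A)}$ via a 1D Hardy/Sobolev bound in the $r$-variable.

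For the special case $m=N-1$, the weight $(\sin\theta)^{N-m-1}\equiv 1$ is non-degenerate, and the 1D Sobolev embedding $H^1(0,\pi/2)\hookrightarrow L^\infty(0,\pi/2)$ in $\theta$, combined with the pointwise monotonicity bound $\mathfrak u(r,\theta)\le \mathfrak u(r,0)$, yields an $L^\infty$-bound in $\theta$ at each $r$; integrating in $r$ and interpolating between $L^\infty$ and $L^2$ then gives the estimate for every $q\in[2,\infty)$. The main obstacle I anticipate is the correct identification of the effective dimension $N-m+1$: on $A$ with the weight $s^{m-1}t^{N-m-1}$ on $Q$, the naive weighted Sobolev exponent is only $2^*_N<2^*_{N-m+1}$, and one can push past $2^*_N$ only by combining the cylindrical symmetry with the monotonicity in $\theta$ on a carefully chosen decomposition, as above.
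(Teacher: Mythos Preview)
Your overall structure --- reduce via the $O(m)$-symmetry to $(s,x'')\in\mathbb R_+\times\mathbb R^{N-m}$ and split at a threshold on $s$ --- is close to the paper's (the paper splits at $\theta=\pi/3$, which forces $s\ge R/2$). But the core step on $A^{(1)}$ does not work as you have written it, and this is the heart of the proof.

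On $A^{(1)}=\{s\ge1\}$ you say that since $s^{m-1}\ge1$ the weighted $H^1$-norm dominates the unweighted one, and then Sobolev in dimension $N-m+1$ yields the bound. The first inequality is true, but it goes in the wrong direction for the $L^q$-side: what you must bound from above is the \emph{weighted} quantity $\int_{\{s\ge1\}}|\tilde u|^q s^{m-1}\,ds\,dx''$, and the weight $s^{m-1}$ is \emph{unbounded above}. The unweighted $L^q$-norm of $\tilde u$ (which is what Sobolev in $\mathbb R^{N-m+1}$ controls) does not dominate this. Willem's result for $O(N-m)$-invariant functions is a compactness statement in the subcritical range and does not supply the missing supercritical embedding either. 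The device the paper uses to close this gap is a rescaling: set $\tilde v:=s^{(m-1)/q}\tilde u$, so that $\int|\tilde u|^q s^{m-1}=\int|\tilde v|^q$, apply Sobolev in $\mathbb R^{N-m+1}$ to $\tilde v$, and then compute that $|\nabla\tilde v|^2+\tilde v^2\le C\bigl(s^{\alpha_1}|\nabla\tilde u|^2+(s^{\alpha_1}+s^{\alpha_2})\tilde u^2\bigr)s^{m-1}$ with $\alpha_1=(m-1)(2/q-1)\le0$ and $\alpha_2=\alpha_1-2<0$. Since $s$ is bounded below, these negative powers are bounded and one recovers control by $\|u\|_{H^1(A)}$. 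Without this rescaling (or an equivalent weighted Sobolev inequality), your $A^{(1)}$-argument is incomplete.

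Your $A^{(2)}$-idea via monotonicity is the right instinct, but the stated mechanism --- a ``1D Hardy/Sobolev bound in the $r$-variable'' on the profile $r\mapsto\mathfrak u(r,\theta_0)$ --- again only reaches $2^*_N$, since that profile lives in a weighted 1D space with weight $r^{N-1}$. What actually works (and is very close to the paper's shift $\theta\mapsto\theta-\pi/4$) is to use $\mathfrak u(r,\theta)\ge\mathfrak u(r,\theta_0)$ for $\theta<\theta_0$ to bound the $L^q$-mass on $A^{(2)}$ by the $L^q$-mass on the region $\{\theta<\theta_0\}\subset A^{(1)}$; this reduces everything to the already treated good region. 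Your $m=N-1$ sketch has the same difficulty: pointwise domination $\mathfrak u(r,\theta)\le\mathfrak u(r,0)$ plus interpolation does not give all $q<\infty$ unless you also handle the $r$-integral, which again requires the rescaling/reduction above (now with $N-m+1=2$, where Sobolev yields every finite $q$).
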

\begin{proof}
Let $u\in \mathcal K$. By formula \eqref{eq:changeintegral} we have
\begin{equation}\label{eq:integral-change-var}
\begin{aligned}
\|u\|_{L^q(A)}^q & =
\omega_{N,m} \int_{R}^\infty \int_0^{\frac{\pi}{3}} |\mathfrak{u}(r,\theta)|^q \mu(\theta) r^{N-1} \,dr\,d\theta \\
& \hspace{0.7cm}+ \omega_{N,m} \int_{R}^\infty \int_{\frac{\pi}{3}}^{\frac{\pi}{2}} |\mathfrak{u}(r,\theta)|^q \mu(\theta) r^{N-1} \,dr\,d\theta \\
&=:I_{[0,\frac{\pi}{3}]}+I_{[\frac{\pi}{3},\frac{\pi}{2}]},
\end{aligned}
\end{equation}
where we have split the integral in the two regions $[R,\infty)\times[0,\pi/3]$ and $[R,\infty)\times[\pi/3,\pi/2]$.

Proceeding similarly to the proof of \cite[Theorem 3.1]{CowanMoameni2022JDE}, we shall show that there exists a constant $C_1>0$ such that 
\begin{equation}\label{eq:I_pi_3}
I_{[\frac{\pi}{3},\frac{\pi}{2}]} \leq C_1 I_{[0,\frac{\pi}{3}]}.
\end{equation}
Indeed, let $C_1$ be such that $(\sin\theta)^{N-m-1} \leq C_1 (\sin(\theta-\pi/4))^{N-m-1}$ for every $\theta \in (\pi/3,\pi/2)$. Being both $\mathfrak{u}$ and $\cos\theta$ decreasing in this interval, recalling the definition of $\mu(\theta)=(\cos\theta)^{m-1}(\sin\theta)^{N-m-1}$, we have
\begin{equation*}
\begin{aligned}
I_{[\frac{\pi}{3},\frac{\pi}{2}]} & \leq 
C_1 \, \omega_{N,m} \int_{R}^\infty \int_{\frac{\pi}{3}}^{\frac{\pi}{2}} \left|\mathfrak{u}\left(r,\theta-\frac{\pi}{4}\right)\right|^q \left(\cos\left(\theta-\frac{\pi}{4}\right)\right)^{m-1}\cdot \\
& \hspace{3,7cm}\cdot \left(\sin\left(\theta-\frac{\pi}{4}\right)\right)^{N-m-1} r^{N-1} \,dr\,d\theta \\
& \leq C_1 \, \omega_{N,m} \int_{R}^\infty \int_{\frac{\pi}{12}}^{\frac{\pi}{4}} \left|\mathfrak{u}\left(r,\theta\right)\right|^q \mu(\theta) r^{N-1 } \,dr\,d\theta
\leq C_1 \, I_{[0,\frac{\pi}{3}]},
\end{aligned}
\end{equation*}
so that \eqref{eq:I_pi_3} holds.

Now, in order to estimate $I_{[0,\frac{\pi}{3}]}$,  we let
\[
\tilde{u}(s,x_{m+1},\ldots,x_N)=u(x)
\]
for all $(s,x_{m+1},\ldots,x_N)\in\R^{N-m+1}$ such that $s^2+x_{m+1}^2+\ldots+x_N^2>R^2$, $s\geq0$. 
We have
\[
I_{[0,\frac{\pi}{3}]} =
\omega_{m-1} \int_\Omega |\tilde{u}(s,x_{m+1},\ldots,x_N)|^q s^{m-1}\,ds\,dx_{m+1}\ldots dx_N,
\]
with
\begin{multline*}
\Omega=\Big\{(s,x_{m+1},\ldots,x_N)\in \R^{N-m+1}: \, s^2+x_{m+1}^2+\ldots+x_N^2>R^2, s\geq0, \\
\arctan\left(\frac{1}{s}\sqrt{x_{m+1}^2+\ldots+x_N^2}\right)\in \left(0,\frac{\pi}{3}\right)\Big\}.
\end{multline*}
We notice that
\begin{equation}\label{eq:s_bound_below}
(s,x_{m+1},\ldots,x_N)\in\Omega \quad\text{implies}\quad s>\frac{R}{2}.
\end{equation}
We consider the rescaled function
\begin{equation}\label{eq:v_tilde_def}
\tilde{v}(s,x_{m+1},\ldots,x_N):=s^{\frac{m-1}{q}}\tilde{u}(s,x_{m+1},\ldots,x_N),
\end{equation}
so that
\[
I_{[0,\frac{\pi}{3}]} =
\omega_{m-1} \int_\Omega |\tilde{v}(s,x_{m+1},\ldots,x_N)|^q \,ds\,dx_{m+1}\ldots dx_N.
\]
Because of the assumptions on $q$, we are now in a position to apply the Sobolev embedding theorem in $\R^{N-m+1}$:
\[
I_{[0,\frac{\pi}{3}]} \leq 
\omega_{m-1} C_{S,q}^q \left( \int_\Omega \left( |\nabla \tilde{v}|^2+ \tilde{v}^2 \right) \,ds\,dx_{m+1}\ldots dx_N\right)^{q/2},
\]
where $C_{S,q}$ denotes the constant of the Sobolev embedding $L^q(\Omega)\subset H^1_0(\Omega)$ in $\R^{N-m+1}$. Rescaling back to the function $\tilde{u}$ we obtain, by \eqref{eq:v_tilde_def},
\[
I_{[0,\frac{\pi}{3}]} \leq C_2 
\left( \int_\Omega \left( s^{\alpha_1} |\nabla \tilde{u}|^2+ (s^{\alpha_1}+s^{\alpha_2}) \tilde{u}^2 \right) s^{m-1} \,ds\,dx_{m+1}\ldots dx_N\right)^{q/2},
\]
for a constant $C_2>0$ depending only on $q,N,m$, and with $\alpha_1=2(m-1)/q-m+1$ and $\alpha_2=2(m-1)/q-m-1$. As $\alpha_2<\alpha_1<0$ and $s$ is bounded from below in $\Omega$ by \eqref{eq:s_bound_below}, we can conclude that
\[
I_{[0,\frac{\pi}{3}]} \leq C_3 
\left( \int_\Omega \left( |\nabla \tilde{u}|^2+ \tilde{u}^2 \right) s^{m-1} \,ds\,dx_{m+1}\ldots dx_N\right)^{q/2}
\leq C_4 \|u\|_{H^1(A)}^q,
\]
where we used that $|\nabla \tilde{u}|^2=|\nabla u|^2$, by \eqref{eq:gradient-s-t}.
The statement follows by combining the last inequality with \eqref{eq:I_pi_3}.
\end{proof}

We remark that the continuous embedding proved in the previous lemma holds also with $m=1$, by the classical Sobolev embedding $H^1_0(A)\hookrightarrow L^{2^*}(A)$. The restriction $m\ge2$, that we require in the whole paper, is 
needed to ensure compactness, see Proposition \ref{prop:compact_embedding} below. 
As the proof of this proposition is quite involved, we need some preliminary notation and results.  

Let $k\geq 2$ be an even integer, we introduce the following subset of $A$
\begin{equation}\label{eq:Ak}
A_k:=\left\{x\in\mathbb{R}^N:\, |x|> R, \,  \theta\in \left[0,\left(1-\frac{1}{k}\right)\frac{\pi}{2}\right) \right\}
\end{equation}
with $\theta$ as in \eqref{eq:theta}. 

\begin{lemma}\label{lem:est1}
Let $q$ be such that \eqref{eq:est3} holds. For every $u\in\mathcal K$ it holds
\begin{equation}\label{eq:claim}
\|u\|^q_{L^q(A\setminus A_k)}\le \frac{1}{c_{N,m}(k-2)+1}\|u\|^q_{L^q(A)}
\end{equation}
with $c_{N,m}:=2^{\frac{N-m-3}{2}}$ independent of $k$.
\end{lemma}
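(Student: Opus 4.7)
The plan is to reduce (\ref{eq:claim}) to an equivalent bulk-vs-tail inequality via Fubini, and then exploit the monotonicity of $\mathfrak u$ in $\theta$ through $k-2$ parallel translations by multiples of $\pi/(2k)$, controlling the angular weight $\mu$ along the way.

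First, I would rewrite the claim as the equivalent statement $c_{N,m}(k-2)\|u\|^q_{L^q(A\setminus A_k)}\le\|u\|^q_{L^q(A_k)}$, and reduce it via (\ref{eq:changeintegral}) and Fubini to the pointwise-in-$r$ angular inequality
\[
c_{N,m}(k-2)\int_{\theta_{k-1}}^{\pi/2}|\mathfrak u(r,\theta)|^q\mu(\theta)\,d\theta\le\int_0^{\theta_{k-1}}|\mathfrak u(r,\theta)|^q\mu(\theta)\,d\theta,
\]
where $\theta_{k-1}:=(1-1/k)\pi/2$. The tail interval $[\theta_{k-1},\pi/2]$ has length $\pi/(2k)$ and fits $k-1$ times inside the bulk $[0,\theta_{k-1}]$ of length $(k-1)\pi/(2k)$.

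Second, for each $j\in\{1,\ldots,k-2\}$ I would perform the translation $\tau=\theta-j\pi/(2k)$, sending the tail onto the interval $I_j:=[\theta_{k-1}-j\pi/(2k),\pi/2-j\pi/(2k)]$. These $k-2$ intervals are pairwise disjoint and together tile $[\pi/(2k),\theta_{k-1}]$. Since $\mathfrak u_\theta\le 0$ from the definition of $\mathcal C$, one has $|\mathfrak u(r,\tau+j\pi/(2k))|\le|\mathfrak u(r,\tau)|$, so the change of variable yields
\[
\int_{\theta_{k-1}}^{\pi/2}|\mathfrak u(r,\theta)|^q\mu(\theta)\,d\theta\le\int_{I_j}|\mathfrak u(r,\tau)|^q\mu(\tau+j\pi/(2k))\,d\tau.
\]

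The crucial, and main, step is then the pointwise weight comparison
\[
\mu(\tau+j\pi/(2k))\le c_{N,m}^{-1}\mu(\tau)\qquad\text{for }\tau\in I_j,\ j=1,\ldots,k-2,
\]
since plugging this into the previous display and summing in $j$, using the disjointness of the $I_j$ to write $\sum_j\int_{I_j}|\mathfrak u|^q\mu\,d\tau\le\int_0^{\theta_{k-1}}|\mathfrak u|^q\mu\,d\tau$, immediately yields the required angular inequality, hence (\ref{eq:claim}). The \emph{main obstacle} is precisely this weight estimate: since $\mu(\theta)=\cos^{m-1}\theta\,\sin^{N-m-1}\theta$, a rightward shift in $\theta$ shrinks the cosine factor but inflates the sine factor, and the two opposite effects have to be balanced. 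The exponent $(N-m-3)/2$ in $c_{N,m}$ is consistent with a proof based on the elementary bounds $\cos\vartheta\ge 1/\sqrt{2}$ on $[0,\pi/4]$ and $\sin\vartheta\ge 1/\sqrt{2}$ on $[\pi/4,\pi/2]$, applied to the $N-m-1$ sine factors on the appropriate part of the shift range; the restriction $j\le k-2$ (rather than $j\le k-1$) is used to keep $\tau+j\pi/(2k)$ strictly below $\pi/2$, so that the sine factor in the denominator remains bounded away from zero uniformly in $k$.
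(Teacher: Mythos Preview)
Your overall strategy---reduce to an angular inequality, translate the tail interval leftwards, use the monotonicity $\mathfrak u_\theta\le 0$, and compare the weights $\mu$---is exactly the paper's. However, the weight comparison you claim,
\[
\mu\!\left(\tau+\tfrac{j\pi}{2k}\right)\le c_{N,m}^{-1}\,\mu(\tau)\qquad\text{for }\tau\in I_j,\ j=1,\dots,k-2,
\]
is \emph{false} for $j$ close to $k-2$ (unless $m=N-1$). Indeed, for $j=k-2$ one has $I_{k-2}=[\pi/(2k),\,2\pi/(2k)]$, so $\sin\tau$ is of order $1/k$, while $\tau+j\pi/(2k)\in[\theta_{k-1},\pi/2]$ gives $\sin(\tau+j\pi/(2k))$ close to $1$. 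Hence
\[
\frac{\mu(\tau+j\pi/(2k))}{\mu(\tau)}
\;\ge\;
\left(\frac{\sin(\tau+j\pi/(2k))}{\sin\tau}\right)^{N-m-1}\!\!\cdot\left(\frac{\cos(\tau+j\pi/(2k))}{\cos\tau}\right)^{m-1}
\]
has its sine factor of order $k^{\,N-m-1}$, which blows up with $k$. Your justification that ``$j\le k-2$ keeps the sine factor in the denominator bounded away from zero'' targets the wrong sine: the denominator in the ratio above is $\sin\tau$, and it is precisely this quantity that degenerates when $I_j$ approaches $0$.

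The paper's remedy is to use fewer translations: only shifts of size at most $(k/2-1)\tfrac{\pi}{2k}<\tfrac{\pi}{4}$, so that every translated interval lies in $[\pi/4,\pi/2)$. There $\sin\tau\ge 1/\sqrt{2}$, and the crude bound $\sin\theta\le 1\le\sqrt{2}\,\sin\tau$ gives $\mu(\theta)\le 2^{(N-m-1)/2}\mu(\tau)$ uniformly. One then obtains $(k-2)/2$ copies of the tail bounded by disjoint pieces of the bulk, which is enough for \eqref{eq:claim}. Your argument becomes correct once you restrict $j$ to (roughly) $\{1,\dots,k/2-1\}$ and adjust the constant accordingly.
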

\begin{proof}
We split the domain $A$ into $k$ subsets as follows: 
\[
A=\bigcup_{j=1}^k\Omega_{k,j},\quad\mbox{with }\Omega_{k,j}:=\left\{x\in\mathbb{R}^N:\, |x|> R, \,  \theta\in \left[\frac{j-1}{k}\cdot\frac{\pi}{2},\frac{j}{k}\cdot\frac{\pi}{2}\right] \right\}
\]
for $j=1,\dots,k$. Observe that $\Omega_{k,k}=A\setminus A_k$.
We furthermore introduce the following integrals
\[
\begin{aligned}
I_{\big[\frac{j-1}{k}\cdot\frac{\pi}{2},\frac{j}{k}\cdot\frac{\pi}{2}\big]}&:=\|u\|^q_{L^q(\Omega_{k,j})}=\omega_{N,m}\int_{R}^\infty\int_{\frac{j-1}{k}\cdot\frac{\pi}{2}}^{\frac{j}{k}\cdot\frac{\pi}{2}}|\mathfrak{u}(r,\theta)|^q\mu(\theta)r^{N-1}\,dr d\theta,
\end{aligned}
\]
where we used \eqref{eq:changeintegral}, and we note that $I_{\big[\frac{k-1}{k}\cdot\frac{\pi}{2},\frac{\pi}{2}\big]}=\|u\|^q_{L^q(A\setminus A_k)}$.
We shall now estimate from above $\|u\|^q_{L^q(A\setminus A_k)}$ with $I_{\big[\frac{j-1}{k}\cdot\frac{\pi}{2},\frac{j}{k}\cdot\frac{\pi}{2}\big]}$ for every $j=k/2+1,\dots,k-1$. To this aim, note that if $\theta\in \big[\frac{k-1}{k}\cdot\frac{\pi}{2},\frac{\pi}{2}\big]$, $\sin\theta\le \sqrt{2} \sin\left(\theta-(k-j)\frac{\pi}{2k}\right)$ for every $j=k/2+1,\dots,k-1$. Taking into account also the monotonicity of the cosine and of $\mathfrak u$ with respect to $\theta$, recalling that $\mu(\theta)=(\cos\theta)^{m-1}(\sin\theta)^{N-m-1}$, we obtain 
\[
\begin{aligned}
\|u\|^q_{L^q(A\setminus A_k)}&\le 2^{\frac{N-m-1}{2}} \omega_{N,m} \int_{R}^\infty\int_{\frac{k-1}{k}\cdot\frac{\pi}{2}}^{\frac{\pi}{2}}\left|\mathfrak u\left(r,\theta-(k-j)\frac{\pi}{2k}\right)\right|^q\\
&\hspace{.3cm}\cdot\left[\cos\left(\theta-(k-j)\frac{\pi}{2k}\right)\right]^{m-1}\left[\sin\left(\theta-(k-j)\frac{\pi}{2k}\right)\right]^{N-m-1}r^{N-1}\,dr d\theta\\
&\le 2^{\frac{N-m-1}{2}} \omega_{N,m}  \int_{R}^\infty\int^{\frac{j}{k}\cdot\frac{\pi}{2}}_{\frac{j-1}{k}\cdot\frac{\pi}{2}}\left|\mathfrak u\left(r,\theta\right)\right|^q\mu(\theta)r^{N-1}\,dr d\theta \\
&=2^{\frac{N-m-1}{2}}I_{\big[\frac{j-1}{k}\cdot\frac{\pi}{2},\frac{j}{k}\cdot\frac{\pi}{2}\big]},
\end{aligned}	
\]  
for every $j=k/2+1,\dots,k-1$. Therefore, 
\[
\begin{aligned}
\|u\|^q_{L^q(A)} & =\sum_{j=1}^k I_{\big[\frac{j-1}{k}\cdot\frac{\pi}{2},\frac{j}{k}\cdot\frac{\pi}{2}\big]}\ge \sum_{j=\frac{k}{2}+1}^{k-1} I_{\big[\frac{j-1}{k}\cdot\frac{\pi}{2},\frac{j}{k}\cdot\frac{\pi}{2}\big]} + \|u\|^q_{L^q(A\setminus A_k)}\\
& \ge \left(\frac{k-2}{2} 2^{-\frac{N-m-1}{2}}+1\right)\|u\|^q_{L^q(A\setminus A_k)},
\end{aligned}	
\]  
which provides \eqref{eq:claim}.
\end{proof}

\begin{lemma}\label{lem:willem}
Let $q\in (2,2^*)$. If $(u_n)\subset \mathcal K$ converges weakly to $u$ in $\mathcal K$, then 
\begin{equation}\label{eq:willem}
u_n|_{A_k}\to u|_{A_k}\quad \mbox{in }L^q(A_k),
\end{equation} 
with $A_k$ as in \eqref{eq:Ak}.
\end{lemma}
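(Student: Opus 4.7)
The assertion is a subcritical compactness result and my strategy is to deduce it from Willem's compactness theorem for invariant Sobolev functions on $G$-symmetric domains. Functions in $\mathcal K$ depend only on $r=|x|$ and on the angle $\theta$ introduced in (\ref{eq:theta}), which itself depends only on $|(x_1,\dots,x_m)|$ and $|(x_{m+1},\dots,x_N)|$; hence they are invariant under the product group $G := O(m)\times O(N-m) \subset O(N)$, acting on the first $m$ coordinates by $O(m)$ and on the last $N-m$ coordinates by $O(N-m)$.

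The action of $G$ on $\R^N$ has trivial fixed-point set $\{0\}$, since both $m \ge 2$ and $N-m\ge 1$, so the stabilizer of every $x \in \R^N\setminus\{0\}$ is a proper subgroup of $G$. Under this hypothesis, Willem's theorem \cite{Willem} yields the compact embedding
\[
H^1_{G,0}(A) \hookrightarrow L^q(A), \qquad 2 < q < 2^*.
\]
Since $\mathcal K \subset H^1_{G,0}(A)$ (functions in $\mathcal K$ vanish on $\partial A$ and are $G$-invariant), the weak convergence $u_n \rightharpoonup u$ in $\mathcal K$ provides, up to a subsequence, strong convergence $u_n \to u$ in $L^q(A)$ for $q \in (2, 2^*)$. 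Restricting to $A_k \subset A$ gives (\ref{eq:willem}), and uniqueness of the weak limit upgrades the subsequential convergence to convergence of the whole sequence.

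An alternative, more hands-on route would transfer the question to $\R^{N-m+1}$ via the change of variables of Subsection \ref{subsec:chofvar}: on $A_k$ one has $s = r\cos\theta \ge R\sin(\pi/(2k)) =: c_k > 0$, so the weight $s^{m-1}$ arising in (\ref{eq:changeintegral}) is bounded below, and the rescaling $\tilde v_n := s^{(m-1)/2}\tilde u_n$ used in the proof of Lemma \ref{le:continuous_embedding} is controlled in $H^1(\Omega_k)$ by $\|u_n\|_{H^1(A)}$. Since $m\ge 2$ forces $\tilde v_n|_{\{s=0\}} = 0$, even reflection across $s=0$ yields an $O(N-m)$-symmetric $H^1$ function on an exterior domain of $\R^{N-m+1}$, to which one can also appeal to Willem's compactness. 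This alternative would be needed if one wanted to push the range of $q$ beyond $2^*$.

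The point where I expect to need the most care is pinning down the precise form of Willem's theorem that we invoke: classical statements require the fixed-point set of the symmetry group to be trivial (or, more generally, every orbit to carry enough dimension to rule out escape at infinity), and this has to be checked against the geometry of $A$ and the product structure of $G$. On the other hand, the restriction to $A_k$ plays no essential role in this lemma---it looks anticipatory, preparing the supercritical compactness of Proposition \ref{prop:compact_embedding}, where the monotonicity encoded in $\mathcal K$ (together with Lemma \ref{lem:est1}) will be crucial.
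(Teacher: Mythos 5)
Your appeal to Willem's compactness theorem with the full group $G = O(m)\times O(N-m)$ on the whole exterior domain $A$ does not work when $m = N-1$, which is precisely the axially symmetric case emphasized in the Introduction and the only admissible case when $N = 3$. Indeed, for $m = N-1$ one has $O(N-m) = O(1) = \{\pm 1\}$, so the $G$-orbit of a point $x$ near the axis $\{s = 0\}$ (i.e.\ with $s = |(x_1,\dots,x_m)|$ small and $|x_N|$ large) consists of two spheres of radius $s$ around $(0,\dots,0,\pm x_N)$; as $|x_N|\to\infty$ along the axis the quantity $\bar m(x,1,G)$ stays bounded, so the hypothesis of Willem's Theorem~1.24 fails and the compact embedding $H^1_{G,0}(A)\hookrightarrow L^q(A)$ you claim is false. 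This also shows that the condition you state first --- trivial fixed-point set --- is not the operative hypothesis; the correct one is the divergence of $\bar m(x,\rho,G)$, which is exactly what needs checking and is not automatic here. Consequently your closing remark that ``the restriction to $A_k$ plays no essential role in this lemma'' is mistaken: the restriction is precisely what saves the argument for $m=N-1$.

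The paper works with the smaller subgroup $O(m)\times \mathrm{Id}_{N-m}$, whose orbit of $x$ is the sphere $\{(\sigma,x_{m+1},\dots,x_N):|\sigma|=s\}$ of radius $s$. The first step of the paper's proof shows that along $A_k$ (or at bounded distance from it) the constraint $\theta < (1-1/k)\pi/2$ forces $s_\ell \to\infty$ whenever $|x_\ell|\to\infty$; from this, an explicit family of rotations in the first two coordinates gives $\bar m(x_\ell,1,O(m)\times \mathrm{Id}_{N-m})\to\infty$. This is why $A_k$ enters: it guarantees the escape-to-infinity takes place in a direction where the chosen (smaller, uniformly usable) symmetry group has unbounded orbits, and it works for every $m\in\{2,\dots,N-1\}$. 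The full-domain compactness of $\mathcal K$ into $L^q(A)$, which your argument would yield directly only for $2\le m\le N-2$, is instead obtained in Proposition~\ref{le:subcritical_compact_embedding} by combining the present lemma on $A_k$ with the monotonicity of $\mathfrak u$ in $\theta$ via Lemma~\ref{lem:est1}; so the monotonicity constraint in $\mathcal K$ is essential and cannot be replaced by symmetry alone. Your sketched alternative via $\R^{N-m+1}$ has the same gap: after reducing away the $O(m)$ action, the remaining $O(N-m)$-symmetry produces orbits of radius $t$, which stay bounded near $\{t=0\}\cap A_k$ even as $s\to\infty$, so Willem's criterion again fails there (and for $m=N-1$, $O(N-m)=O(1)$ gives nothing at all). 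That said, the observation that your direct argument does give a cleaner conclusion for $2\le m\le N-2$ is correct; the paper's route has the advantage of treating all $m\ge 2$ uniformly.
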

\begin{proof}
We shall apply \cite[Theorem 1.24]{Willem}.
As in  \cite[\S 1.5]{Willem}, given $\rho>0$, $x\in\R^N$ and $G$ a subgroup of the orthogonal group $O(N)$, we let
\begin{equation}\label{eq:m_bar_def}
\begin{multlined}
\bar{m}(x,\rho, G):=\sup\left\{ n\in\N:\, \exists\, g_1,\ldots,g_n \in G 
\right.  \text{ such that } \\
\left.  B_\rho(g_ix)\cap B_\rho(g_jx)=\emptyset \ \forall\, i\neq j \right\}.
\end{multlined}
\end{equation}
We claim that any sequence $(x_\ell)\subset\R^N$ such that
\begin{equation}\label{eq:Willem-assumption1}
\lim_{\ell\to\infty}|x_\ell|=\infty, \quad
\text{dist}(x_\ell,A_k)\leq 1 \ \forall\ell 
\end{equation}
satisfies
\begin{equation}\label{eq:Willem-assumption2}
\lim_{\ell\to\infty} \bar{m}(x_\ell,1, O(m)\times \textit{Id}_{N-m})=\infty.
\end{equation}

To this aim, we first prove that for every $(x_\ell)\subset A_k$ such that $|x_\ell|=\sqrt{s_\ell^2+t_\ell^2}\to\infty$, the sequence $(s_\ell)$ diverges as well,  where $s$ and $t$ are as in \eqref{eq:s-t}. 
Indeed, being $\theta_\ell=\arctan(t_\ell/s_\ell)$, by the definition of $A_k$ we get $t_\ell^2/s_\ell^2<\tan^2\left((1-1/k)\pi/2\right)=:\alpha^2$. Hence, 
\[
\left(1-\frac{1}{\alpha^2+1}\right)s_\ell^2=\frac{\alpha^2}{\alpha^2+1}s_\ell^2>\frac{1}{\alpha^2+1}t_\ell^2
\]
that is 
\[
s_\ell^2>\frac{1}{\alpha^2+1}(s_\ell^2+t_\ell^2)\to \infty\quad\mbox{as }\ell\to\infty,
\]
which gives the desired property. It is easy to infer that for any sequence $(x_\ell)$ satisfying \eqref{eq:Willem-assumption1} it also holds $s_\ell\to\infty$. Indeed, let $z_\ell\in A_k$ be such that $|x_\ell-z_\ell|\le 3/2$, then, as already proved, $(\sigma_\ell)^2:=(z_\ell^{(1)})^2+\dots+(z_\ell^{(m)})^2\to\infty$. Since, by the Cauchy-Schwarz inequality, $(s_\ell-\sigma_\ell)^2\le |x_\ell-z_\ell|^2$, we conclude that $s_\ell\to\infty$ as well.

We are now in a position to prove \eqref{eq:Willem-assumption2}. Let $x_\ell$ satisfy \eqref{eq:Willem-assumption1} and let $\ell$ be large.
We shall exhibit the elements $g_{j,\ell} \in G$ that appear in the definition \eqref{eq:m_bar_def} with $x=x_\ell$, $\rho=1$ and $G=O(m)\times \textit{Id}_{N-m}$.
Since $O(m)$ acts transitively on the spheres $\{\sigma \in \R^m\::\: |\sigma| = s_\ell\}$, we may choose $h_\ell \in O(m)\times \textit{Id}_{N-m}$ with the property that
  $$
  \tilde x_\ell := h_\ell x_\ell = (s_\ell,0,\dots,0,x_\ell^{N-m+1},\dots,x_\ell^{N}),
  $$
  where $x_\ell^{N-m+1},\dots,x_\ell^{N}$ denote the last $N-m$ coordinates of $x_\ell$. We now let  
\[
\phi_\ell:=2\arcsin(s_\ell^{-1}), 
\quad \text{ so that }\lim_{\ell\to\infty}\phi_\ell=0,
\]
and we let 
$$
g_{j,\ell} := \tilde g_j h_\ell \:\in \: O(m)\times \textit{Id}_{N-m},
$$
 where
$\tilde g_j$ is the $N\times N$ block matrix given by 
\begin{equation}\label{eq:g_j_def}
\tilde g_j =\begin{pmatrix}
R(j\phi_\ell) & 0 &0 \\
0  & \textrm{Id}_{m-2} & 0\\
0 &0 & \textrm{Id}_{N-m}
\end{pmatrix}.
\end{equation}
Here $R(\varphi)$ is the $2\times2$ matrix of the counterclockwise rotation of angle $\varphi$ in the plane, that is
\[
R(\varphi)=\begin{pmatrix}
\cos\varphi & -\sin\varphi \\
\sin\varphi & \cos\varphi
\end{pmatrix}.
\]
Let the indices $i,j$ be such that $0\leq i <j \leq \lfloor \pi/\phi_\ell \rfloor$.  
By \eqref{eq:g_j_def}, the definition of $\phi_\ell$ and the fact that $\phi_\ell \leq (j-i)\phi_\ell \leq \pi$, we have
\begin{multline*}
|g_{i,\ell} x_\ell - g_{j,\ell} x_\ell |^2 = |\tilde g_i \tilde x_\ell - \tilde g_j \tilde x_\ell |^2
= |\tilde x_\ell - {\tilde g_i}^{-1} \tilde g_j \tilde x_\ell |^2 \\
= 2(1-\cos((j-i)\phi_\ell)) \, s_\ell^2
=4 \sin^2\left(\frac{(j-i)\phi_\ell}{2}\right) \, s_\ell^2 \geq
4 s_\ell^2 \sin^2 \frac{\phi_\ell}{2}=4,
\end{multline*}
so that $|g_{i,\ell} x_\ell - g_{j,\ell} x_\ell |\geq 2$.  As a consequence,  $B_1(g_{i,\ell} x_\ell)\cap B_1(g_{i,\ell} x_\ell)=\emptyset$.
This proves that the matrices $g_{j,\ell}$, for $j=0, \ldots, \lfloor \pi/\phi_\ell \rfloor$ are admissible for \eqref{eq:m_bar_def}.
In conclusion,  $\lim_{\ell\to\infty} \bar{m}(x_\ell,1,O(m)\times \textit{Id}_{N-m})\geq \lim_{\ell\to\infty} \lfloor \pi/\phi_\ell \rfloor =\infty$, thus proving \eqref{eq:Willem-assumption2}.

In conclusion, by applying \cite[Theorem 1.24]{Willem}, we infer that \eqref{eq:willem} holds.
\end{proof}

As a crucial intermediate step towards Proposition \ref{prop:compact_embedding}, taking advantage of the previous two lemmas, in the next proposition we prove compactness up to the critical exponent in dimension $N$.

\begin{proposition}\label{le:subcritical_compact_embedding}
$\mathcal K$ is compactly embedded in $L^q(A)$ for every $q\in (2,2^*)$.
\end{proposition}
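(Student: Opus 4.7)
The plan is to combine the two preceding lemmas to split the integral over $A$ into a "compact part" on $A_k$ and a "small tail" on $A\setminus A_k$, and then let $k\to\infty$. The key point is that both the continuous embedding bound and the tail estimate of Lemma~\ref{lem:est1} hold uniformly along any bounded sequence in $\mathcal{K}$, while on each fixed $A_k$ strong $L^q$-convergence is already guaranteed by Lemma~\ref{lem:willem}.

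More precisely, I would start with a bounded sequence $(u_n)\subset \mathcal{K}$ in the $H^1$-norm. Since $\mathcal{K}$ is weakly closed (Lemma~\ref{le:Kcone}), up to a subsequence $u_n\rightharpoonup u$ weakly in $H^1_0(A)$ with $u\in\mathcal{K}$. By the continuous embedding Lemma~\ref{le:continuous_embedding} (noting that $2^*<2^*_{N-m+1}$ by \eqref{eq:assumptions_p}), the sequence $(u_n)$ is uniformly bounded in $L^q(A)$ for the chosen $q\in(2,2^*)$, and so is the weak limit $u$. Call this bound $M$, so that $\|u_n\|_{L^q(A)}^q,\|u\|_{L^q(A)}^q\le M$ for all $n$.

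Given $\varepsilon>0$, I would then use Lemma~\ref{lem:est1} to choose an even integer $k$ so large that
\[
\|u_n\|_{L^q(A\setminus A_k)}^q+\|u\|_{L^q(A\setminus A_k)}^q\le \frac{2M}{c_{N,m}(k-2)+1}<\varepsilon.
\]
On the fixed set $A_k$, Lemma~\ref{lem:willem} gives $u_n\to u$ strongly in $L^q(A_k)$, hence there exists $n_0$ such that $\|u_n-u\|_{L^q(A_k)}^q<\varepsilon$ for all $n\ge n_0$. Splitting
\[
\|u_n-u\|_{L^q(A)}^q\le \|u_n-u\|_{L^q(A_k)}^q+2^{q-1}\bigl(\|u_n\|_{L^q(A\setminus A_k)}^q+\|u\|_{L^q(A\setminus A_k)}^q\bigr),
\]
I would conclude that $u_n\to u$ in $L^q(A)$, proving the claimed compactness.

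The main obstacle in this statement has already been resolved by the preceding lemmas: Lemma~\ref{lem:willem} supplies the local compactness on each $A_k$ via Willem's group-theoretic compactness result, while Lemma~\ref{lem:est1} exploits the monotonicity in $\theta$ of functions in $\mathcal{K}$ to control the $L^q$-mass near $\theta=\pi/2$, where the weight $(\sin\theta)^{N-m-1}$ degenerates. The remaining argument is only a routine tail-splitting combined with a diagonal choice of $k$ and $n$.
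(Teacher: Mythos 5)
Your proposal is correct and follows essentially the same route as the paper: split $A$ into $A_k$ and $A\setminus A_k$, use Lemma~\ref{lem:willem} for strong $L^q$-convergence on $A_k$, use Lemma~\ref{lem:est1} (together with the uniform $L^q$-bound from the continuous embedding and weak lower semicontinuity for the limit) to control the tail uniformly in $n$, and let $k\to\infty$. The only difference is cosmetic: you phrase the final step as an $\varepsilon$--$n_0$ argument with the elementary inequality $(a+b)^q\le 2^{q-1}(a^q+b^q)$, whereas the paper takes $\limsup_{n\to\infty}$ at fixed $k$ and then sends $k\to\infty$; both are the same argument.
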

\begin{proof} Let $q$ be in $(2,2^*)$ and $(u_n)\subset\mathcal K$ be a sequence bounded in the $H^1(A)$-norm. By Lemma \ref{le:Kcone} there exists $u\in \mathcal K$ such that $u_n \rightharpoonup u$ in $H^1(A)$. 
Since $H^1(A)\hookrightarrow L^q(A)$ by Lemma \ref{le:continuous_embedding}, $(u_n)$ is bounded also in $L^q(A)$, and $u_n\rightharpoonup u$ in $L^q(A)$. Thus, there exists a constant $C>0$ such that for every $n$ and for every even integer $k$ 
\[
\|u_n\|_{L^q(A\setminus A_k)}^q\le \frac{1}{c_{N,m}(k-2)+1}\|u_n\|^q_{L^q(A)}\le \frac{C}{c_{N,m}(k-2)+1},
\]
where $A_k$ is as in \eqref{eq:Ak} and we used Lemma \ref{lem:est1} in the first inequality. By the weak lower semicontinuity of the norm, also the weak limit satisfies the same inequality.
Therefore, using Lemma \ref{lem:willem}, we get for every even integer $k$
\begin{equation}\label{eq:untou}
\begin{aligned}
\limsup_{n\to\infty}\|u_n-u\|_{L^q(A)}^q &=\limsup_{n\to\infty}\left(\|u_n-u\|_{L^q(A_k)}^q+\|u_n-u\|_{L^q(A\setminus A_k)}^q\right)\\
&\le \frac{C'}{c_{N,m}(k-2)+1},
\end{aligned}
\end{equation}
with $C'>0$ independent of $k$.
Finally, letting $k\to\infty$ in \eqref{eq:untou}, we have $u_n\to u$ in $L^q(A)$ and the proof is concluded. 
\end{proof}


We can now prove the compactness property $(i)$ of Theorem \ref{thm:thm2.1CM}.

\begin{proposition}\label{prop:compact_embedding}
$\mathcal K$ is compactly embedded in $L^q(A)$ for every $q\in(2,2^*_{N-m+1})$.
\end{proposition}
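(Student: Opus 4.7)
The goal is to upgrade the compactness result of Proposition \ref{le:subcritical_compact_embedding}, which covers only the range $q \in (2, 2^*)$, to the full range $(2, 2^*_{N-m+1})$. Since the case $q \in (2, 2^*)$ is already settled, it suffices to deal with $q \in [2^*, 2^*_{N-m+1})$. My plan is to combine the strong convergence at a subcritical exponent (from Proposition \ref{le:subcritical_compact_embedding}) with the continuous embedding at an exponent arbitrarily close to $2^*_{N-m+1}$ (from Lemma \ref{le:continuous_embedding}) via a standard $L^p$-interpolation argument.

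Concretely, let $(u_n) \subset \mathcal{K}$ be bounded in $H^1(A)$. By Lemma \ref{le:Kcone}, along a subsequence $u_n \rightharpoonup u$ weakly in $H^1(A)$ for some $u \in \mathcal{K}$. Fix $q \in [2^*, 2^*_{N-m+1})$ and choose two auxiliary exponents
\[
q_0 \in (2, 2^*), \qquad q_1 \in (q, 2^*_{N-m+1}),
\]
so that $q_0 < q < q_1$. Then there exists $\theta \in (0,1)$ with
\[
\frac{1}{q} = \frac{\theta}{q_0} + \frac{1-\theta}{q_1},
\]
and the standard interpolation inequality in Lebesgue spaces gives
\[
\|u_n - u\|_{L^q(A)} \le \|u_n - u\|_{L^{q_0}(A)}^{\theta}\, \|u_n - u\|_{L^{q_1}(A)}^{1-\theta}.
\]

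Now Proposition \ref{le:subcritical_compact_embedding} (applied with exponent $q_0$) yields $\|u_n - u\|_{L^{q_0}(A)} \to 0$, while Lemma \ref{le:continuous_embedding} (applied with exponent $q_1$, which is admissible since $q_1 < 2^*_{N-m+1}$) gives a uniform bound $\|u_n - u\|_{L^{q_1}(A)} \le C$, using that $(u_n)$ and $u$ are bounded in $H^1(A)$. Combining these two facts in the interpolation inequality yields $\|u_n - u\|_{L^q(A)} \to 0$, establishing the desired compact embedding.

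\textbf{Main obstacle.} There is no real technical obstacle here: all the heavy lifting has already been done in Lemmas \ref{lem:est1} and \ref{lem:willem} (which drive Proposition \ref{le:subcritical_compact_embedding}) and in Lemma \ref{le:continuous_embedding} (which provides the higher-exponent control). The only thing to be careful about is the choice of the auxiliary exponents: we need $q_0$ strictly below $2^*$ so that Proposition \ref{le:subcritical_compact_embedding} applies, and $q_1$ strictly below $2^*_{N-m+1}$ so that Lemma \ref{le:continuous_embedding} gives a continuous embedding. Since $2^* < 2^*_{N-m+1}$ by \eqref{eq:assumptions_p}, such a choice is always possible for any $q \in [2^*, 2^*_{N-m+1})$, closing the argument.
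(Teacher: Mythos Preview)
Your proof is correct and follows essentially the same interpolation strategy as the paper: combine the subcritical compactness of Proposition~\ref{le:subcritical_compact_embedding} with the higher-exponent continuous embedding of Lemma~\ref{le:continuous_embedding}. Your choice of a generic $q_1 \in (q, 2^*_{N-m+1})$ is in fact slightly cleaner than the paper's version, which interpolates with the endpoint $2^*_{N-m+1}$ and must therefore treat the case $m=N-1$ (where $2^*_{N-m+1}=\infty$) separately.
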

\begin{proof}
Let $(u_n)\subset\mathcal K$ be a sequence bounded in the $H^1(A)$-norm.  By Lemma \ref{le:Kcone} and Proposition \ref{le:subcritical_compact_embedding} there exists $u\in \mathcal K$ such that
\[
u_n \rightharpoonup u \text{ in } H^1(A) 
\quad\text{ and }\quad
u_n\to u \text{ in }L^\tau(A)
\]
for every $\tau\in (2,2^*)$. 
Let us first consider the case $m \in \{2,\ldots,N-2\}$.
Fix $\tau\in (2,2^*)$ and $q\in[2^*,2^*_{N-m+1})$. As, by Lemma \ref{le:continuous_embedding}, $u_n,u\in L^\tau(A)\cap L^{2^*_{N-m+1}}(A)$, the interpolation inequality provides
\[
\|u_n-u\|_{L^q(A)} \leq \|u_n-u\|_{L^\tau(A)}^\alpha \|u_n-u\|_{L^{2^*_{N-m+1}}(A)}^{1-\alpha},
\]
with $\alpha\in(0,1)$ such that $1/q=\alpha/\tau+(1-\alpha)/2^*_{N-m+1}$. By Proposition \ref{le:subcritical_compact_embedding}, $\|u_n-u\|_{L^\tau(A)}\to0$ as $n\to\infty$, whereas, by Lemma \ref{le:continuous_embedding}, $\|u_n-u\|_{L^{2^*_{N-m+1}}(A)}$ is bounded, thus providing the desired $L^q$-convergence.

When $m=N-1$, one can proceed in a similar way, choosing $\tau\in (2,2^*)$, $q\in[2^*,\infty)$ and applying the interpolation inequality
\[
\|u_n-u\|_{L^q(A)} \leq \|u_n-u\|_{L^\tau(A)}^\alpha \|u_n-u\|_{L^{2q}(A)}^{1-\alpha},
\]
with $1/q=\alpha/\tau+(1-\alpha)/(2q)$, as $\|u_n-u\|_{L^{2q}(A)}$ is bounded by Lemma \ref{le:continuous_embedding}.
\end{proof}

\subsection{Pointwise invariance property}
As for the property $(ii)$ in Theorem \ref{thm:thm2.1CM}, we first consider the auxiliary linear problem in the cone $\mathcal K$. 

\begin{lemma}\label{le:invarianceK}
Let $h\in \mathcal K\cap L^\infty(A)$. The linear problem 
\begin{equation}\label{Ph}
\begin{cases}
-\Delta v + v =h \quad&\mbox{in } A,\\
v \in H^1_0(A)
\end{cases}
\end{equation}
has a unique solution $v$ and moreover $v\in \mathcal K$.
\end{lemma}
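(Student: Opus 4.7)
My plan is to prove existence and uniqueness via Lax--Milgram, deduce nonnegativity by the weak maximum principle, recover the $G := O(m) \times O(N-m)$-symmetry of $v$ from uniqueness and the equivariance of the problem, and finally establish the monotonicity $\mathfrak v_\theta \leq 0$ by applying a maximum principle to $\mathfrak v_\theta$ in $(r,\theta)$-coordinates. Concretely: since $h \in \mathcal K \subset H^1_0(A) \subset L^2(A)$, the linear form $\varphi \mapsto \int_A h\varphi\,dx$ is continuous on $H^1_0(A)$ and the bilinear form $(u,\varphi) \mapsto \int_A(\nabla u \cdot \nabla \varphi + u\varphi)\,dx$ is continuous and coercive, so Lax--Milgram yields a unique weak solution $v \in H^1_0(A)$. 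Testing the equation with $v^- \in H^1_0(A)$ gives $-\|v^-\|_{H^1}^2 = \int_A h\,v^- \geq 0$, hence $v \geq 0$. For every $g \in G$ the function $v \circ g$ is also a weak solution (as $A$ and $h$ are $G$-invariant), so uniqueness forces $v = v \circ g$ and thus $v(x) = \mathfrak v(r,\theta)$.

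It remains to show $\mathfrak v_\theta \leq 0$. I would work in $(r,\theta)$-coordinates on the sector $\Sigma := (R,\infty) \times (0,\pi/2)$, in which by \eqref{eq:Deltau_s-r-th} the equation reads
\[
-\mathfrak v_{rr} - \frac{N-1}{r}\mathfrak v_r - \frac{1}{r^2\mu(\theta)} \partial_\theta\bigl(\mu(\theta)\mathfrak v_\theta\bigr) + \mathfrak v = \mathfrak h.
\]
From the Dirichlet condition on $\partial A$, the evenness of $v$ in every Cartesian variable, and elliptic regularity ($h\in L^\infty(A)$ giving $v \in C^{1,\alpha}$ up to $\partial A$), I obtain the boundary conditions $\mathfrak v(R,\cdot) = 0$, $\mathfrak v \to 0$ as $r\to\infty$, and $\mathfrak v_\theta(r,0) = \mathfrak v_\theta(r,\pi/2) = 0$. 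Differentiating the equation formally in $\theta$ and using that $\partial_\theta(\mu'/\mu) = -(m-1)\sec^2\theta - (N-m-1)\csc^2\theta$, the function $w := \mathfrak v_\theta$ satisfies
\[
-w_{rr} - \frac{N-1}{r}w_r - \frac{1}{r^2\mu(\theta)} \partial_\theta\bigl(\mu(\theta)w_\theta\bigr) + \bigl(1 + c(r,\theta)\bigr) w = \mathfrak h_\theta \leq 0,
\]
where $c(r,\theta) := r^{-2}\bigl[(m-1)\sec^2\theta + (N-m-1)\csc^2\theta\bigr] \geq 0$, together with $w = 0$ on $\partial\Sigma$. Since the zeroth-order coefficient $1 + c$ is bounded below by $1$ and the right-hand side is nonpositive, the weak maximum principle should give $w \leq 0$ in $\Sigma$, i.e., $\mathfrak v_\theta \leq 0$, whence $v \in \mathcal K$.

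The main obstacle I anticipate is justifying this maximum-principle argument rigorously: the coefficient $c(r,\theta)$ diverges as $\theta \to 0^+$ or $\theta \to (\pi/2)^-$ (a degeneracy inherent to the polar coordinates near the $m$- and $(N-m)$-coordinate subspaces), $\Sigma$ is unbounded, and $w = \mathfrak v_\theta$ has only the limited regularity afforded by elliptic estimates. I would address this by applying the weak maximum principle to $w^+$ on the truncated bounded subdomains $\Sigma_{\varepsilon,R_0} := (R,R_0) \times (\varepsilon,\pi/2-\varepsilon)$, where all coefficients are bounded and $v$ is classical by interior elliptic regularity, and then passing to the limits $\varepsilon \to 0^+$ and $R_0 \to \infty$, using the vanishing of $w$ on the artificial boundaries in the limit (via the Neumann conditions on the axes and the decay of $v$ at infinity).
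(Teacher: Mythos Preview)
Your overall strategy coincides with the paper's: existence/uniqueness, nonnegativity, $O(m)\times O(N-m)$-symmetry from uniqueness, and then differentiation in $\theta$ to obtain an elliptic equation for $w=\mathfrak v_\theta$ with nonnegative zeroth-order potential and nonpositive right-hand side, concluded by a maximum-principle argument. The paper carries this out in Cartesian coordinates on $\tilde A = A\setminus\{s=0\text{ or }t=0\}$ rather than in $(r,\theta)$ on $\Sigma$, but that is cosmetic. One point you should make explicit: Lax--Milgram alone yields only $v\in H^1_0(A)$; to differentiate the PDE you need more, and the paper obtains $v\in W^{2,q}(A)$ for all finite $q$ (via Agmon--Douglis--Nirenberg, using $h\in L^q$ by interpolation) and then bootstraps to $v\in H^3(A)$ since $\Delta v=v-h\in H^1$.

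Where your implementation diverges is the final maximum-principle step, and there is a real difficulty with the route you propose. Applying the classical maximum principle on $\Sigma_{\varepsilon,R_0}$ and letting $\varepsilon\to 0$, $R_0\to\infty$ requires $\sup_{\theta}\mathfrak v_\theta(R_0,\theta)^+\to 0$ as $R_0\to\infty$. But $\mathfrak v_\theta=-t\,\tilde{\mathfrak v}_s+s\,\tilde{\mathfrak v}_t$ carries a factor comparable to $r$, so this amounts to $|\nabla v|=o(1/r)$ at infinity, which does not follow from $v\in C^{1,\alpha}(\bar A)\cap W^{2,q}(A)$. The paper sidesteps this entirely: it tests the equation for $v_\theta$ against $w_n:=(v_\theta-\varepsilon)^+\,\eta(\cdot/n)$, where $\eta$ is a standard radial cutoff. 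The $\varepsilon$-shift, together with the continuity of $v_\theta$ and its vanishing on $\{s=0\}\cup\{t=0\}$, forces $w_n$ to be compactly supported in $\tilde A$, so integration by parts is legitimate and the singular coefficient causes no trouble. One then lets $n\to\infty$ using $H^1$-convergence of $w_n\to(v_\theta-\varepsilon)^+$ for the gradient term and monotone convergence for the (nonnegative) singular term, obtaining $(v_\theta-\varepsilon)^+\equiv 0$; since $\varepsilon>0$ is arbitrary, $v_\theta\le 0$. No pointwise decay of $v_\theta$ at infinity is needed. You should replace your domain-truncation step by this test-function argument.
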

\begin{proof} By interpolation, $h\in L^q(A)$ for every $q\in[2,\infty]$. Thus, by the Agmon-Douglis-Nirenberg theorem in smooth unbounded domains with bounded boundary (see for example \cite[Theorem 9.32]{brezis2010}), problem \eqref{Ph} admits a unique solution 
\[
v\in W^{2,q}(A)\cap W_0^{1,q}(A) \text{ for every } q\in [2,\infty).
\]
As a consequence, $v\in C^{1,\alpha}(\bar A)\cap L^\infty(A)$ for every $\alpha\in[0,1)$. Furthermore $v$ is a strong solution of \eqref{Ph}, meaning that $v$ and its weak derivatives satisfy the equation pointwise a.e. in $A$, see for example \cite[\S 2.4.3]{GGS}. Moreover, $\Delta v = v - h \in H^1(A)$ and so $v\in H^3(A)$.

In what follows we prove that $v\in\mathcal C$. By uniqueness and thanks to the fact that the problem (i.e., the operator, the right-hand side, and the domain) is invariant under the action of the group $O(m)\times O(N-m)$, the solution $v$ is such that $v=\mathfrak{v}(r,\theta)$. Furthermore, by the maximum principle, since $h\ge0$, also $v\ge 0$ in $A$.  

It remains to show that $\mathfrak{v}_\theta\le 0$. We observe that, being $v\in H^3(A)$, $\mathfrak{v}\in H^3_{\mathrm{loc}}((R,\infty)\times(0,\pi/2))$, so that we can perform the weak derivative in $\theta$ a.e. of the equation in \eqref{Ph}. In view of \eqref{eq:Deltau_s-r-th}, we obtain the pointwise equation
\begin{multline*}
-\left(\mathfrak{v}_{\theta rr} +\frac{N-1}{r}\mathfrak{v}_{\theta r} +\frac{1}{r^2} \mathfrak{v}_{\theta\theta\theta}-\frac{(m-1)\tan\theta - (N-m-1)\cot\theta}{r^2}\mathfrak{v}_{\theta\theta} \right)+ \\
+ \frac{\mathfrak{v}_\theta}{r^2}\left(\frac{m-1}{\cos^2\theta}+\frac{N-m-1}{\sin^2\theta}\right) + \mathfrak{v}_\theta=\mathfrak{h}_\theta
\end{multline*}
for $(r,\theta)\in (R,\infty)\times\left(0,\pi/2\right)$.
Going back to the $x$-variable, we denote
\[
v_\theta(x):=\mathfrak{v}_\theta(r,\theta) \quad\mbox{ and }\quad
h_\theta(x):=\mathfrak{h}_\theta(r,\theta),
\]
so that we can rewrite the previous equation as
\[
-\Delta v_\theta + \left(\frac{m-1}{x_1^2+\ldots+x_m^2}+\frac{N-m-1}{x_{m+1}^2+\ldots+x_n^2}+1\right) v_\theta = h_\theta\\
\]
in
\[ 
\tilde A:= \{ x \in A\::\: x_1^2+\ldots+x_{m}^2 \not = 0 \mbox{ and }x_{m+1}^2+\ldots+x_{N}^2 \not = 0\}.
\]
We claim that the H\"older continuous function $v_\theta$ vanishes on $\partial \tilde A$. Indeed, first note that
\[
\begin{aligned}
\partial \tilde A&=\partial A\cup\{x\in A\,:\,x_1=\dots=x_m=0\}\cup\{x\in A\,:\,x_{m+1}=\dots=x_N=0\}\\
&=\partial A\cup(A\cap\{s=0\})\cup(A\cap\{t=0\}),
\end{aligned}
\] 
with $s,t$ as in \eqref{eq:s-t}.
Since $v=\mathfrak v(r,\theta)$, $v_\theta$ coincides with the tangential derivative of $v$ on $\partial A$; hence, the boundary condition $v|_{\partial A}=0$ implies $v_\theta=0$ on $\partial A$.
The fact that $v_\theta=0$ on the remaining part of $\partial \tilde A$ follows noticing that for every $i=1,\dots,N$, 
\[v_{x_i}(x_1,\dots,x_{i-1},0,x_{i+1},\dots,x_N)=0\] 
by the symmetry and regularity of $v$, and that $\mathfrak{v}_\theta=-t\tilde{\mathfrak{v}}_s+s\tilde{\mathfrak{v}}_t$, cf. Subsection \ref{subsec:chofvar}.
   
In sum, $v_\theta$ satisfies the problem
\begin{equation}\label{pb:for-u_theta}
 \begin{cases}
-\Delta v_\theta+\left(\frac{m-1}{x_1^2+\ldots+x_{m}^2}+\frac{N-m-1}{x_{m+1}^2+\ldots+x_{N}^2}+1\right)v_\theta = h_\theta\le 0 & \mbox{ a.e. in } \tilde A\\
v_\theta \in H^1_0(\tilde A) &  
\end{cases}
\end{equation}
Due to the singularity of the equation on the set $\{s=0\}\cup\{t=0\}$, we cannot apply the weak maximum principle directly to deduce that $v_\theta \le 0$. Instead, we let $\varepsilon>0$, and consider the function $w:= (v_\theta-\varepsilon)^+\in  H^1_0(\tilde A)$ and a radial, radially nonincreasing cutoff function $\eta \in C^\infty_c(B_{2}(0))$ with the following properties
\[
0\le \eta \le 1,\quad \eta\equiv 1 \mbox{ in } B_1(0).
\]
Let $w_n(x):=\eta(x/n) w(x)$ for every $x\in \tilde A$. Notice that $(w_n)$ approximates $w$ in the $H^1$-norm. Hence, 
$\int_{\tilde A}\nabla v_\theta\cdot\nabla w_n\,dx\to \int_{\tilde A}\nabla v_\theta\cdot\nabla w\,dx$ and
\begin{equation}\label{eq:monotone}
\begin{aligned}
&\lim_{n\to\infty}\int_{\tilde A} \Bigl(\frac{m-1}{x_1^2+\ldots+x_{m}^2}+\frac{N-m-1}{x_{m+1}^2+\ldots+x_{N}^2}+1\Bigr)v_\theta w_n\,dx \\
&=\lim_{n\to\infty} \int_{\tilde A\cap\{v_\theta>\varepsilon\}} \Bigl(\frac{m-1}{x_1^2+\ldots+x_{m}^2}+\frac{N-m-1}{x_{m+1}^2+\ldots+x_{N}^2}+1\Bigr) \cdot\\
& \hspace{7truecm} \cdot v_\theta(v_\theta - \varepsilon)\eta\left(\frac{x}{n}\right)\,dx\\
&=  \int_{\tilde A} \Bigl(\frac{m-1}{x_1^2+\ldots+x_{m}^2}+\frac{N-m-1}{x_{m+1}^2+\ldots+x_{N}^2}+1\Bigr)v_\theta w\,dx,
\end{aligned}
\end{equation}
where the limit under the integral sign is justified by monotone convergence, since the integrand is nonnegative in $\{v_\theta>\varepsilon\}$. 

Now we multiply \eqref{pb:for-u_theta} by the function $w_n$ whose support is compactly contained in the set $\tilde A$ where the equation is non-singular. Hence, we can integrate by parts, obtaining the inequality
\begin{equation*}
\begin{aligned}
0&\ge \int_{\tilde A} h_\theta w_n \,dx = \int_{\tilde A}\Bigl( -\Delta v_\theta+\Bigl(\frac{m-1}{x_1^2+\ldots+x_{m}^2}+\frac{N-m-1}{x_{m+1}^2+\ldots+x_{N}^2}+1\Bigr)v_\theta\Bigr) w_n\,dx \\
&= \int_{\tilde A}\Bigl(\nabla v_\theta \cdot \nabla w_n   +\Bigl(\frac{m-1}{x_1^2+\ldots+x_{m}^2}+\frac{N-m-1}{x_{m+1}^2+\ldots+x_{N}^2}+1\Bigr)v_\theta w_n\Bigr)\,dx.
\end{aligned}
\end{equation*}
By combining the last inequality with \eqref{eq:monotone}, we can pass to the limit and obtain 
\begin{align*}
0&\ge \int_{\tilde A}\Bigl(\nabla v_\theta \cdot \nabla w   +\Bigl(\frac{m-1}{x_1^2+\ldots+x_{m}^2}+\frac{N-m-1}{x_{m+1}^2+\ldots+x_{N}^2}+1\Bigr)v_\theta w\Bigr)\,dx \nonumber \\
&\ge \int_{\tilde A}\Bigl(|\nabla w|^2   +\Bigl(\frac{m-1}{x_1^2+\ldots+x_{m}^2}+\frac{N-m-1}{x_{m+1}^2+\ldots+x_{N}^2}+1\Bigr)w^2\Bigr)\,dx
\end{align*}
This implies $w = (v_\theta-\varepsilon)^+ \equiv 0$ in $\tilde A$. Since $\varepsilon>0$ was chosen arbitrarily, we deduce that $v_\theta\le 0$, thus concluding the proof.
\end{proof}

By combining Lemma \ref{le:invarianceK} with a truncation argument, we can conclude the proof of the pointwise invariance of $\mathcal K$.

\begin{proposition}\label{prop:invarianceC}
Let $u\in \mathcal K$. The problem 
\begin{equation}\label{P-up-1}
\begin{cases}
-\Delta v + v =a(x)u^{p-1}\quad&\mbox{in } A,\\
v \in H^1_0(A)
\end{cases}
\end{equation}
has a unique solution $v$ and moreover $v\in \mathcal K$.
\end{proposition}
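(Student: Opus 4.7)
The plan is to reduce the problem to Lemma \ref{le:invarianceK} by a truncation-and-limit argument. For each $n \in \mathbb{N}$, I would set $u_n := \min\{u, n\}$. By Stampacchia's chain rule, $u_n \in W^{1,1}_{\mathrm{loc}}(A)$ with $\nabla u_n = \nabla u\cdot \chi_{\{u<n\}}$; in particular $\partial_\theta u_n = \partial_\theta u \cdot \chi_{\{u<n\}} \le 0$ a.e., so $u_n \in \mathcal{K}$ and moreover $u_n \in L^\infty(A)$. Consequently $h_n := a\, u_n^{p-1}$, being a product of nonnegative, $O(m)\times O(N-m)$-symmetric functions that are nonincreasing in $\theta$, belongs to $\mathcal{K}$; it is also in $L^\infty(A)$ since $\|h_n\|_\infty \le \|a\|_\infty n^{p-1}$. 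Lemma \ref{le:invarianceK} then yields a unique $v_n \in \mathcal{K}$ solving $-\Delta v_n + v_n = h_n$ in $H^1_0(A)$.

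The next step is to obtain a uniform $H^1$-bound on $(v_n)$. Testing the equation against $v_n \in \mathcal{K} \subset L^p(A)$ (using the continuous embedding of Lemma \ref{le:continuous_embedding}) and applying H\"older's inequality together with $u_n \le u$, I would get
\[
\|v_n\|_{H^1}^2 = \int_A a\, u_n^{p-1} v_n \, dx \le \|a\|_\infty \|u\|_{L^p}^{p-1}\|v_n\|_{L^p} \le C \|u\|_{L^p}^{p-1}\|v_n\|_{H^1},
\]
which gives the desired bound. Combining the weak closedness of $\mathcal{K}$ (Lemma \ref{le:Kcone}) with the compact embedding $\mathcal{K}\hookrightarrow L^p(A)$ from Proposition \ref{prop:compact_embedding}, I can extract a subsequence along which $v_n \rightharpoonup v$ in $H^1(A)$ with $v \in \mathcal{K}$ and $v_n \to v$ in $L^p(A)$. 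At the same time, dominated convergence provides $u_n^{p-1} \to u^{p-1}$ in $L^{p/(p-1)}(A)$. Thus, for any test function $\varphi \in H^1_0(A) \cap L^p(A)$, both sides of the weak formulation
\[
\int_A \nabla v_n \cdot \nabla \varphi + v_n \varphi \,dx = \int_A a\, u_n^{p-1}\varphi\,dx
\]
pass to the limit by weak $H^1$ convergence on the left and H\"older on the right, showing that $v \in \mathcal{K}$ is a weak solution of \eqref{P-up-1}. Strong $H^1$-convergence $v_n \to v$ then follows by testing with $v_n - v$, and uniqueness within the class $H^1_0(A)\cap L^p(A)$ is a consequence of the linearity of the equation (the difference of two solutions lies in the test space and can be used to test itself).

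The main technical obstacle is that, in the supercritical regime $p > 2^*$, the datum $a u^{p-1}$ only belongs to $L^{p/(p-1)}(A)$ and in general does not define an element of $H^{-1}(A)$; the natural notion of weak solution must therefore restrict to test functions in $H^1_0(A) \cap L^p(A)$. What makes the limit argument work is precisely that, thanks to the continuous and compact embeddings $\mathcal{K}\hookrightarrow L^p(A)$ proved earlier, the approximants $v_n$ are uniformly bounded in this smaller space, and the limit $v$ still lies in $\mathcal{K}$ by the weak closedness of the cone. The verification that truncation preserves membership in $\mathcal{K}$, and that products and powers preserve the $\theta$-monotonicity, is routine but must be checked carefully to ensure the applicability of Lemma \ref{le:invarianceK} at each step.
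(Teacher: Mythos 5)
Your overall strategy matches the paper's: truncate $u$ at level $n$, apply Lemma~\ref{le:invarianceK} to the truncations, derive a uniform $H^1$-bound, and pass to the limit using weak closedness of $\mathcal K$. The limit step is handled a bit differently (you use dominated convergence in $L^{p/(p-1)}$ against test functions in $H^1_0\cap L^p$, while the paper uses monotone convergence against $\varphi\in C^\infty_c(A)$); both work, and your strong-convergence and uniqueness remarks are correct supplements.

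However, there is a genuine gap at the step where you invoke Lemma~\ref{le:invarianceK}: you assert that $h_n := a\,u_n^{p-1} \in \mathcal K$ on the grounds of nonnegativity, $O(m)\times O(N-m)$-symmetry, and $\theta$-monotonicity, but $\mathcal K = \mathcal C \cap H^1_0(A)$ and you never check that $h_n \in H^1_0(A)$. This is not automatic: $h_n \in L^\infty$ does not give $h_n \in H^1$, and the product rule $\nabla h_n = u_n^{p-1}\nabla a + (p-1)a\,u_n^{p-2}\nabla u_n$ requires $|\nabla a| \in L^2(A)$ (from \eqref{a_assumptions}) together with $u_n \in L^\infty$ to land in $L^2(A)$, and one also needs $h_n \in L^2(A)$ (again from $u_n \in L^\infty$ and $u_n \in L^2$). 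This is precisely what the paper verifies in the first part of its proof (where it treats the case $u \in L^\infty(A)$ before truncating), and it is where the hypothesis on $\nabla a$ is actually used. Moreover, the $H^1$-regularity of $h_n$ is not cosmetic: the proof of Lemma~\ref{le:invarianceK} relies on $h \in H^1(A)$ to upgrade $v$ to $H^3(A)$ before differentiating the equation in $\theta$. Your closing sentence flags the $\theta$-monotonicity of products as the thing to check, but that part really is routine; the nontrivial and hypothesis-dependent verification is the $H^1$ one, which should be made explicit.
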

\begin{proof}
We first prove the result with the additional hypothesis $u\in L^\infty(A)$. Since both $a$ and $u$ belong to $\mathcal C\cap L^\infty(A)$, $au^{p-1}\in\mathcal C\cap L^\infty(A)$. Moreover, being  by interpolation $u\in L^q(A)$ for every $q\in[2,\infty]$, $au^{p-1}\in L^2(A)$. It remains to show that $|\nabla(au^{p-1})|\in L^2(A)$. For this, we use the assumptions \eqref{a_assumptions} on $a$ and that $u\in L^\infty(A)$ to get 
\[
\begin{split}
\int_A |\nabla(au^{p-1})|^2\,dx \le 2\int_A |\nabla a|^2 u^{2(p-1)}\,dx+2(p-1)^2\int_A a^2 u^{2(p-2)}|\nabla u|^2\,dx\\
\le 2\|\nabla a\|_{L^2(A)}^2\|u\|_{L^\infty(A)}^{2(p-1)}+2(p-1)^2 \|a\|_{L^\infty(A)}^2\|u\|_{L^\infty(A)}^{2(p-2)}\|\nabla u\|_{L^2(A)}^2 < \infty.
\end{split}
\]
Then, the conclusion follows by Lemma \ref{le:invarianceK}, with $h=au^{p-1}\in \mathcal K\cap L^\infty(A)$.

The remaining part of the proof takes inspiration from \cite[Proposition 4.2]{CowanMoameni2022}. Let $u\in \mathcal K$ and define $u_n:=\min\{u,n\}$ for every $n\in\mathbb N$. Then, $(u_n)\subset \mathcal K\cap L^\infty(A)$. By the first part of the proof, for every $u_n$ there exists a unique $v_n\in\mathcal K$ solving 
\[
\begin{cases}
-\Delta v + v =a(x)u_n^{p-1}\quad&\mbox{in } A,\\
v \in H^1_0(A).
\end{cases}
\]
Hence, using the equation solved by $v_n$, H\"older's inequality, and that $\mathcal K\hookrightarrow L^p(A)$ by Lemma \ref{le:continuous_embedding}, 
\begin{equation}\label{eq:est-vn}
\begin{aligned}
\|v_n\|_{H^1(A)}^2&= \int_A a(x)u_n^{p-1}v_n\,dx\le \|a\|_{L^\infty(A)}\|u_n\|_{L^p(A)}^{p-1}\|v_n\|_{L^p(A)}\\
&\le C\|a\|_{L^\infty(A)}\|u_n\|_{L^p(A)}^{p-1}\|v_n\|_{H^1(A)}.
\end{aligned}
\end{equation}
Now, being $u_n\to u$ a.e. in $A$ and $0\le u_n\le u_{n+1}$ for every $n$, by monotone convergence, 
\[
\lim_{n\to\infty}\int_A u_n^p\,dx=\int_A u^p\,dx.
\]
Therefore, $(u_n)$ is bounded in $L^p(A)$, and by \eqref{eq:est-vn},
\[
\|v_n\|_{H^1(A)}\le C\|a\|_{L^\infty(A)}\|u_n\|_{L^p(A)}^{p-1}\le C'\|a\|_{L^\infty(A)}, 
\]
with $C'>0$ independent of $n$. Thus, $(v_n)$ is bounded in $H^1_0(A)$, and so, there exists $v\in H^1_0(A)$ such that up to a subsequence $v_n\rightharpoonup v$ weakly in $H^1(A)$. This implies that for every $\varphi\in C^\infty_c(A)$, 
\[
\lim_{n\to\infty}\int_A(\nabla v_n\cdot\nabla \varphi+v_n\varphi)\,dx = \int_A(\nabla v\cdot\nabla \varphi+v\varphi)\,dx. 
\]
Moreover, for every $\varphi\in C^\infty_c(A)$ it also holds, by monotone convergence, 
\[
\lim_{n\to\infty}\left|\int_A a(x)\left(u_n^{p-1}-u^{p-1}\right) \varphi\,dx \right|\le \lim_{n\to\infty}\|a\varphi\|_{L^\infty(A)} \int_A \left(u^{p-1} - u_n^{p-1}\right)\,dx = 0. 
\]
Whence, being for every $n\in\mathbb N$, 
\[
\int_A(\nabla v_n\cdot\nabla \varphi+v_n\varphi)\,dx = \int_A a(x)u_n^{p-1}\varphi\,dx\;\mbox{ for every }\varphi\in C^\infty_c(A),
\]
we get 
\[
\int_A(\nabla v\cdot\nabla \varphi+v\varphi)\,dx = \int_A a(x)u^{p-1}\varphi\,dx\;\mbox{ for every }\varphi\in C^\infty_c(A)
\]
that is $v\in H^1_0(A)$ solves \eqref{P-up-1}. Finally, since $\mathcal K$ is weakly closed by Lemma \ref{le:Kcone}, $v\in \mathcal K$ and the proof is concluded.
\end{proof}

\subsection{End of the proof of Theorem \ref{thm:main_existence}}
\label{sec:end-proof-theorem}

As already mentioned, for the existence part we apply Theorem \ref{thm:thm2.1CM} with $\Omega=A$, $w=a$, and $K=\mathcal{K}$. We observe that all the hypotheses therein are satisfied. Indeed, $\mathcal K$ is a convex set which is closed in $H^1_0(A)$ by Lemma \ref{le:Kcone}. As for the compactness property (i), we know that from every $H^1$-bounded sequence $(u_n)\subset\mathcal K$ we can extract a subsequence weakly convergent in $H^1(A)$; since
$p \in (2,2^*_{N-m+1})$, by 
Proposition \ref{prop:compact_embedding} $(u_n)$ strongly converges in $L^p(A)$.

Moreover, the pointwise invariance property (ii) is satisfied by Proposition \ref{prop:invarianceC}. Finally, any radial and strictly positive function $v \in H^1_0(A)$ belongs to ${\mathcal K}$ and satisfies $\int_A a|v|^pdx>0$ by assumption (\ref{a_assumptions}), which implies that $I(\lambda v)\le 0$ for sufficiently large $\lambda >0$. Hence assumption (iii) of Theorem \ref{thm:thm2.1CM} is also satisfied. 
Consequently, Theorem \ref{thm:thm2.1CM} guarantees the existence of a nontrivial weak solution $u$ of \eqref{P} belonging to $\mathcal K$ and having the following variational characterization 
\[
I(u)=\inf_{\gamma\in\Gamma}\sup_{t\in[0,1]} I(\gamma(u))>0,
\]
where $\Gamma:=\{\gamma\in C([0,1];\mathcal K):\gamma(0)=0\neq\gamma(1),\,I(\gamma(1))\le 0\}$. 
We observe that, being $u$ a solution in $\mathcal K\setminus\{0\}$, $I'(u)u=0$, and so $u\in\mathcal N_\mathcal K$.
 
Now, arguing as in \cite[Section 2.4]{BCNW2023}, $c_I:=\inf_{\mathcal N_{\mathcal K}}I(u)=\inf_{u\in\mathcal K\setminus \{0\}}\sup_{t\ge 0}I(tu)$. Moreover, by virtue of the fact that $a(x)t^{p-1}/t$ is strictly increasing in $t$, the equality $\inf_{u\in\mathcal K\setminus \{0\}}\sup_{t\ge 0}I(tu)=\inf_{\gamma\in\Gamma}\sup_{t\in[0,1]} I(\gamma(u))$ holds as well, cf. for instance a proof for a similar case contained in \cite[Lemma 5.4]{CN}. Altogether, the solution found has energy level $c_I$.  \hfill $\qed$

\section{The case of radial weight $a$}
\label{sec:case-a-radial}

The aim of this section is to prove the multiplicity result stated in Theorem \ref{thm:main-a-rad}.  We deal with problem \eqref{P-rad}, that is
\begin{equation*}
   \begin{cases}
  -\Delta u + u = a(|x|) u^{p-1} \qquad &\mbox{in }A,\\
u>0 &\mbox{in }A,\\
u \in H^1_0(A) &
  \end{cases}
\end{equation*} 
with $a$ being a radial weight satisfying \eqref{a_assumptions} and $A = \{x\in\mathbb{R}^N:\, |x|>{R}\}$.

As a first step, we fix any integer $m \in \{2,\ldots,N-1\}$, and we show in Proposition \ref{prop:non_radial} below that, for $p$ satisfying \eqref{eq:suff_p},  every $\mathcal K$-ground state solution of \eqref{P-rad} is nonradial.

To this end, let $\eta(x)=\mathfrak y(\theta)$ be given by
\begin{equation}\label{eq:eta-def}
\mathfrak y(\theta):=(N-m)\cos^2\theta-m\sin^2\theta 
=\frac{N}{2}(\cos(2\theta)+1)-m,
\end{equation}
for $\theta\in(0,\pi/2)$,  or equivalently, for $x\neq0$,
\[
\eta(x)=(N-m)\sum_{i=1}^m \left(\frac{x_i}{|x|}\right)^2 - m\sum_{i=m+1}^N \left(\frac{x_i}{|x|}\right)^2.
\]
In view of the above expression, it is possible to show that 
\begin{equation}\label{radial-weakly-stable-assumption-4}
    \int_{\mathbb S_r^{N-1}} \eta(x)\,d\sigma = 0\quad\mbox{for every $r>0$,}
\end{equation}
where $\mathbb S_r^{N-1}$ is the sphere of radius $r$ of $\mathbb R^N$. 
Indeed, as for every $i,j=1,\ldots,N$, and $r>0$
\[
\int_{\mathbb S_r^{N-1}} x_i^2\,d\sigma = \int_{\mathbb S_r^{N-1}} x_j^2\,d\sigma,
\]
it results that
\[
\int_{\mathbb S_r^{N-1}} \eta(x)\,d\sigma 
= \frac{1}{r^2}\int_{\mathbb S_r^{N-1}}\left((N-m)\sum_{i=1}^m x_i^2-m\sum_{i={m+1}}^N x_i^2 \right)d\sigma 
 = 0.
\]
By explicit calculations it is possible to verify that $\mathfrak y$ satisfies the following condition
\begin{equation}\label{radial-weakly-stable-assumption-2}
\mathfrak{y}'(\theta) \le 0 \qquad\qquad \text{ for $\theta\in (0,\pi/2)$,}
\end{equation} and that it solves the boundary value problem
\begin{equation}\label{eq:eta}
\begin{cases}
-(\mu(\theta)\mathfrak y')'=2N \mu(\theta) \mathfrak y \qquad & \mbox{in } \left(0,\frac{\pi}{2}\right) \\
\mathfrak y'(0)=\mathfrak y'\left(\frac{\pi}{2}\right)=0,
\end{cases}
\end{equation}
where $\mu$ is given in \eqref{eq:mu}.

The next lemma takes inspiration from both \cite{BCNW2023} and \cite{CowanMoameni2022}.  

\begin{lemma}\label{lem:v_def}
Assume that $p$ satisfies \eqref{eq:suff_p}.
Let $u_\mathrm{rad}(x)=\mathfrak u_\mathrm{rad}(r)\in\mathcal K$ be a nontrivial radial solution of \eqref{P-rad}.
Then the function $v(x)=\mathfrak{v}(r,\theta):=\mathfrak u_\mathrm{rad}(r)\mathfrak y(\theta)$, with $\mathfrak y$ as in \eqref{eq:eta-def}, belongs to $H^1_0(A)\cap L^p(A)$ and satisfies the following property
\begin{equation}
\label{radial-weakly-stable-assumption-1}
I''(u_\mathrm{rad})(v,v)<0. 
\end{equation}
\end{lemma}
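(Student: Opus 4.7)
My plan is to compute $I''(u_{\mathrm{rad}})(v,v)$ by separating variables in the bispherical coordinates $(r,\theta)$, using the ODE satisfied by $\mathfrak{y}$ and the radial equation satisfied by $u_{\mathrm{rad}}$, and then to close the sign inequality by combining a Hardy estimate in $r$ with the trivial pointwise bound $r\ge R$.

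First I would verify that $v\in H^1_0(A)\cap L^p(A)$. Since $\mathfrak{y}$ and $\mathfrak{y}'$ are smooth and bounded on $[0,\pi/2]$, formula \eqref{eq:gradient-r-theta} gives $|\nabla v|^2 = (\mathfrak{u}_{\mathrm{rad}}')^2\mathfrak{y}^2 + r^{-2}\mathfrak{u}_{\mathrm{rad}}^2(\mathfrak{y}')^2$, whose second summand is integrable over $A$ by the classical radial Hardy inequality applied to $\mathfrak{u}_{\mathrm{rad}}$, while the trace of $v$ on $\partial A$ vanishes because $\mathfrak{u}_{\mathrm{rad}}(R)=0$. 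The $L^p$ bound follows from $|v|\le \|\mathfrak{y}\|_\infty\,u_{\mathrm{rad}}$ together with $u_{\mathrm{rad}}\in\mathcal{K}\hookrightarrow L^p(A)$ by Lemma~\ref{le:continuous_embedding}.

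Next I would carry out the separation of variables. Set $Y:=\int_0^{\pi/2}\mathfrak{y}^2\mu\,d\theta>0$; thanks to \eqref{eq:changeintegral}, \eqref{eq:gradient-r-theta} and the radial character of $a$, every integral in the identity $I''(u_{\mathrm{rad}})(v,v)=\int_A(|\nabla v|^2+v^2)\,dx-(p-1)\int_A a\,u_{\mathrm{rad}}^{p-2}v^2\,dx$ factors into a radial and an angular part. Testing \eqref{eq:eta} against $\mathfrak{y}$ on $(0,\pi/2)$ (the boundary terms vanish by the Neumann conditions in \eqref{eq:eta} together with $\mu(\pi/2)=0$) gives $\int_0^{\pi/2}(\mathfrak{y}')^2\mu\,d\theta=2NY$, while testing the radial ODE for $\mathfrak{u}_{\mathrm{rad}}$ against $\mathfrak{u}_{\mathrm{rad}}$ with the weight $r^{N-1}$ and integrating by parts on $(R,\infty)$ (no boundary contribution in view of $\mathfrak{u}_{\mathrm{rad}}(R)=0$ and the decay at infinity) yields $\int_R^\infty[(\mathfrak{u}_{\mathrm{rad}}')^2+\mathfrak{u}_{\mathrm{rad}}^2]r^{N-1}dr=\int_R^\infty a\,\mathfrak{u}_{\mathrm{rad}}^p\, r^{N-1}dr$. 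Substituting both identities leads to
\begin{equation*}
I''(u_{\mathrm{rad}})(v,v) = \omega_{N,m}\,Y\Bigl\{2N\!\int_R^\infty \mathfrak{u}_{\mathrm{rad}}^2\, r^{N-3}\,dr \;-\; (p-2)\!\int_R^\infty[(\mathfrak{u}_{\mathrm{rad}}')^2+\mathfrak{u}_{\mathrm{rad}}^2]\,r^{N-1}\,dr\Bigr\}.
\end{equation*}

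The main obstacle is the sharp closure of the bracket, which must be negative exactly under hypothesis \eqref{eq:suff_p}. I would combine the strict radial Hardy inequality $((N-2)/2)^2\!\int_R^\infty\mathfrak{u}_{\mathrm{rad}}^2\, r^{N-3}dr < \int_R^\infty(\mathfrak{u}_{\mathrm{rad}}')^2\, r^{N-1}dr$ (valid since $\mathfrak{u}_{\mathrm{rad}}(R)=0$ and $\mathfrak{u}_{\mathrm{rad}}\not\equiv 0$) with the pointwise bound $r^{N-1}\ge R^2 r^{N-3}$, which gives $R^2\!\int_R^\infty\mathfrak{u}_{\mathrm{rad}}^2\, r^{N-3}dr \le \int_R^\infty\mathfrak{u}_{\mathrm{rad}}^2\, r^{N-1}dr$. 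Summing the two yields
\begin{equation*}
\Bigl[\Bigl(\tfrac{N-2}{2}\Bigr)^{\!2}+R^2\Bigr]\!\int_R^\infty\mathfrak{u}_{\mathrm{rad}}^2\, r^{N-3}dr \;<\; \int_R^\infty[(\mathfrak{u}_{\mathrm{rad}}')^2+\mathfrak{u}_{\mathrm{rad}}^2]\, r^{N-1}dr,
\end{equation*}
and since \eqref{eq:suff_p} rewrites as $(p-2)[((N-2)/2)^2+R^2]\ge 2N$, plugging this estimate into the bracket above forces $I''(u_{\mathrm{rad}})(v,v)<0$, which completes the plan.
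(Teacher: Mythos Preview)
Your proof is correct and follows essentially the same route as the paper: separate variables via \eqref{eq:changeintegral} and \eqref{eq:gradient-r-theta}, use the angular identity $\int_0^{\pi/2}(\mathfrak y')^2\mu\,d\theta=2N\int_0^{\pi/2}\mathfrak y^2\mu\,d\theta$ from \eqref{eq:eta} and the radial Nehari identity for $u_{\mathrm{rad}}$, and then close with the combination of Hardy's inequality and the pointwise bound $r\ge R$. The only cosmetic difference is that you invoke strictness of Hardy (since extremals are not attained) whereas the paper obtains the strict sign from the fact that $r>R$ on a set of positive measure; either justification suffices.
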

\begin{proof}
As $u_\mathrm{rad} \in \mathcal K \subset L^p(A)$ by Lemma \ref{le:continuous_embedding} and $\mathfrak{y}$ is bounded, $v\in L^p(A)$.

In order to prove \eqref{radial-weakly-stable-assumption-1},  we take advantage of \eqref{eq:gradient-r-theta} to write
\[
\begin{aligned}
&I''(u_\mathrm{rad})(v,v)=\int_A \left( |\nabla v|^2+v^2-(p-1)a(|x|) u_\mathrm{rad}^{p-2}v^2\right)\,dx\\ 
&= \omega_{N,m}\int_{R}^\infty\bigg\{\int_0^{\frac{\pi}{2}}\bigg[\mathfrak{u}_\mathrm{rad}'^{\,2}(r)\mathfrak y^2(\theta)+\frac{\mathfrak{u}_\mathrm{rad}^2(r)\mathfrak y'^{\,2}(\theta)}{r^2}\bigg.\bigg.\\
&\bigg.\bigg.\hspace{2cm}+\mathfrak{u}_\mathrm{rad}^2(r)\mathfrak y^2(\theta)\Big(1-(p-1)\mathfrak a(r)\mathfrak{u}_\mathrm{rad}^{p-2}(r)\Big)\bigg]\mu(\theta) r^{N-1}\,d\theta\bigg\}dr\\
&  = \omega_{N,m}\left(\int_{R}^\infty\left[ \mathfrak{u}_\mathrm{rad}'^{\,2}(r)+\mathfrak{u}_\mathrm{rad}^{\,2}(r)-(p-1)\mathfrak a(r) \mathfrak{u}_\mathrm{rad}^{p}(r)\right]r^{N-1}\, dr\right) \cdot \left(\int_0^{\frac{\pi}{2}} \mathfrak y^2(\theta)\mu(\theta)\,d\theta\right) \, \\
& \hspace{2cm}+  \omega_{N,m}\left(\int_{R}^\infty \frac{\mathfrak{u}_\mathrm{rad}^2(r)}{r^2} r^{N-1}\, dr\right)
\cdot \left(\int_0^{\frac{\pi}{2}} \mathfrak y'^{\,2}(\theta)\mu(\theta)\,d\theta\right),
\end{aligned}
\]
with $\omega_{N,m}$ as in \eqref{eq:changeintegral}.
Then we notice that \eqref{eq:eta} implies
\begin{equation}\label{eq:eta-integrated}
\int_0^{\frac{\pi}{2}} \mathfrak y'^{\,2}(\theta)\mu(\theta) \, d\theta = 
2N \int_0^{\frac{\pi}{2}} \mathfrak y^2(\theta) \mu(\theta) \, d\theta.
\end{equation}
Moreover, since $u_\mathrm{rad}$ solves \eqref{P-rad}, it satisfies
\begin{equation}\label{eq:u-rad-integrated}
\int_{R}^{\infty} \left[ \mathfrak{u}_\mathrm{rad}'^{\,2}(r) +\mathfrak{u}_\mathrm{rad}^{2}(r) \right] r^{N-1}\, dr= \int_{R}^{\infty} \mathfrak a(r) \mathfrak{u}_\mathrm{rad}^p(r) r^{N-1}\, dr.
\end{equation}
Altogether, we find
\[
\begin{aligned}
I''(u_\mathrm{rad})(v,v) &=
\omega_{N,m} \left(\int_{R}^{\infty} \bigg\{ -(p-2)\bigg[ \mathfrak{u}_\mathrm{rad}'^{\,2}(r) +\mathfrak{u}_\mathrm{rad}^{2}(r) \bigg]+2N \frac{\mathfrak{u}_\mathrm{rad}^2(r)}{r^2}  \bigg\} r^{N-1} \, dr\right) \\
&\hspace{7.5cm} \cdot \left(\int_0^{\frac{\pi}{2}} \mathfrak y^2(\theta) \mu(\theta) \, d\theta\right).
\end{aligned}
\]
Finally, Hardy's inequality 
\[
\int_{R}^\infty \frac{\mathfrak{u}_\mathrm{rad}^2(r)}{r^2}r^{N-1}dr\le \left(\frac{2}{N-2}\right)^2\int_{R}^\infty \mathfrak{u}_\mathrm{rad}'^{\,2}(r)r^{N-1}dr,
\]
allows to estimate
\[
\begin{aligned}
I''(u_\mathrm{rad})(v,v)& <
\omega_{N,m}\left\{2N-(p-2)\left[\left(\frac{N-2}{2}\right)^2+R^2\right]\right\}\\
&\hspace{3cm}\cdot \left(\int_{R}^\infty \frac{\mathfrak{u}_\mathrm{rad}^2(r)}{r^2} r^{N-1}dr\right)\cdot\left(\int_0^{\frac{\pi}{2}} \mathfrak y^2(\theta) \mu(\theta) \,d\theta\right),
\end{aligned}
\]
where the inequality is strict since $R>0$ and $u_\mathrm{rad}\neq 0$.
In view of \eqref{eq:suff_p}, the above inequality provides \eqref{radial-weakly-stable-assumption-1}.
The previous calculations also show that $v\in H^1_0(A)$, thus concluding the proof.
\end{proof}

As already mentioned, problem \eqref{P-rad} admits a radial solution for every $p>2$. We show in the next proposition that any solution provided by Theorem \ref{thm:main_existence} for $m \in \{2,\ldots,N-1\}$ is nonradial.

\begin{proposition}\label{prop:non_radial}
Let $N\geq3$ and let $p$ satisfy \eqref{eq:suff_p}.
Then every $\mathcal K$-ground state solution of \eqref{P-rad} is nonradial.
\end{proposition}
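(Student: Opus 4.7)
The strategy is to argue by contradiction: suppose some $\mathcal{K}$-ground state $u^*$ of \eqref{P-rad} is radial; I will produce a perturbation $w_\varepsilon\in\mathcal{K}$ with $\sup_{t\ge 0} I(tw_\varepsilon) < c_I$, contradicting the mountain-pass characterization $c_I = \inf_{u\in\mathcal{K}\setminus\{0\}}\sup_{t\ge 0} I(tu)$ recalled in Section~\ref{sec:end-proof-theorem}. Since $u^*\in\mathcal{K}$ is radial and nontrivial, the strong maximum principle yields $u^*>0$ in $A$, so Lemma~\ref{lem:v_def} applies with $u_\mathrm{rad}=u^*$ (for any fixed $m\in\{2,\ldots,N-1\}$) and produces $v(x)=u^*(|x|)\mathfrak{y}(\theta)\in H^1_0(A)\cap L^p(A)$ with $I''(u^*)(v,v)<0$.

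The first step would be to verify that, for every sufficiently small $\varepsilon>0$, the perturbation
\[
w_\varepsilon := u^* + \varepsilon v = u^*(|x|)\bigl(1+\varepsilon\mathfrak{y}(\theta)\bigr)
\]
belongs to $\mathcal{K}\setminus\{0\}$: positivity follows from the boundedness of $\mathfrak{y}$ and from $u^*>0$; membership in $H^1_0(A)$ is immediate since $u^*,v\in H^1_0(A)$; and the monotonicity $(w_\varepsilon)_\theta=\varepsilon u^*\,\mathfrak{y}'(\theta)\le 0$ follows from \eqref{radial-weakly-stable-assumption-2}. The characterization of $c_I$ then gives
\[
c_I \le \phi(\varepsilon):=\sup_{t\ge 0} I(tw_\varepsilon) = I(t_\varepsilon w_\varepsilon),
\]
where $t_\varepsilon>0$ is the unique Nehari projection, determined by $t_\varepsilon^{p-2}\int_A a\,w_\varepsilon^p=\|w_\varepsilon\|_{H^1}^2$, and satisfying $t_0=1$. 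Differentiating $\phi$ twice via the optimality condition $I'(t_\varepsilon w_\varepsilon)(w_\varepsilon)=0$ at the maximizer and using that $u^*$ is a critical point of $I$, one finds $\phi'(0)=I'(u^*)(v)=0$ and
\[
\phi''(0) = I''(u^*)(v,v) + t_0'\, I''(u^*)(u^*,v),
\]
with $t_0'$ computed by implicit differentiation of the defining relation for $t_\varepsilon$.

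The key step, which I expect to be the main obstacle, is showing that the cross term $t_0'\, I''(u^*)(u^*,v)$ vanishes. This follows from the orthogonality identity
\[
\beta := \int_A a(|x|)(u^*)^{p-1} v\, dx = 0,
\]
which is a direct consequence of the radial symmetry of $a(u^*)^{p}$ combined with the spherical mean-zero property $\int_{\mathbb{S}^{N-1}_r}\eta\, d\sigma=0$ from \eqref{radial-weakly-stable-assumption-4}: in spherical coordinates the integral factors out $\int_0^{\pi/2}\mathfrak{y}(\theta)\mu(\theta)\, d\theta=0$. Testing the equation satisfied by $u^*$ against $v$ gives $\langle u^*,v\rangle_{H^1}=\beta=0$; implicit differentiation of $t_\varepsilon^{p-2}\int_A a\,w_\varepsilon^p=\|w_\varepsilon\|_{H^1}^2$ then yields $t_0'=-\beta/\|u^*\|_{H^1}^2=0$, while $I''(u^*)(u^*,v)=\beta-(p-1)\beta=-(p-2)\beta=0$. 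Hence $\phi''(0)=I''(u^*)(v,v)<0$, and a Taylor expansion produces $\phi(\varepsilon)<c_I$ for small $\varepsilon>0$, contradicting the bound $c_I\le\phi(\varepsilon)$. Without the orthogonality $\beta=0$ the expansion would acquire an extra positive contribution of order $\beta^2$, and it is precisely the mean-zero property built into the construction of $\mathfrak{y}$ in Lemma~\ref{lem:v_def} that eliminates this term and closes the argument.
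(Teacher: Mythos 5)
Your proposal is correct and follows essentially the same strategy as the paper's proof: take the perturbation $v=u^*\mathfrak{y}(\theta)$ from Lemma~\ref{lem:v_def}, verify that $u^*+\varepsilon v\in\mathcal K$ for small $\varepsilon>0$ using \eqref{radial-weakly-stable-assumption-2}, project onto the Nehari set, and use both $I''(u^*)(v,v)<0$ and the orthogonality $\beta=\int_A a(u^*)^{p-1}v\,dx=0$ (a consequence of \eqref{radial-weakly-stable-assumption-4}) to drop below $c_I$. The only difference is technical: you obtain the Nehari projection $t_\varepsilon$ via the implicit function theorem and compute $\phi''(0)$ directly, whereas the paper locates $t_*$ by the intermediate value theorem and estimates $I(u_*)-I(u^*)$ by a first-order Taylor expansion with integral remainder along the segment $\tau\mapsto t_*(u^*+\tau v)$, thereby avoiding any regularity discussion for the map $\varepsilon\mapsto t_\varepsilon$. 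Both implementations are sound; yours is a standard second-variation argument, while the paper's is slightly more hands-on. A couple of minor remarks on your write-up: the cone $\mathcal{K}$ depends on the chosen $m$, so $v$ must be built with the \emph{same} $m$ that defines the ambient $\mathcal{K}$ (not "any fixed $m$"); and the invocation of the strong maximum principle to get $u^*>0$ is unnecessary, since $u^*\ge 0$ and $\|\mathfrak{y}\|_\infty\le N$ already give $w_\varepsilon=u^*(1+\varepsilon\mathfrak{y})\ge 0$ for $\varepsilon<1/N$.
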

\begin{proof}
Let $u$ be a $\mathcal K$-ground state solution of \eqref{P-rad}. Suppose by contradiction that $u$ is radial so that Lemma \ref{lem:v_def} applies to the function $v=u\eta=\mathfrak u(r)\mathfrak{y}(\theta)$, with $\eta=\mathfrak{y}(\theta)$ as in \eqref{eq:eta-def}.
By (\ref{radial-weakly-stable-assumption-1}) and the continuity of $I''$, there exist $\delta \in (0,1)$ and $\rho>0$ with the property that 
\begin{equation}
  \label{second-der-stable-pointw-est}
\qquad  \qquad I''\bigl(t(u+\tau v)\bigr)(v,v)<0 \qquad \text{for $t \in
[1-\delta,1+\delta]$, $\tau \in [-\rho,\rho]$.}
\end{equation}
By adjusting $\delta$ and $\rho$ if necessary, we have that
$$
t(u+\tau v)=tu(1+\tau \eta) \ge 0  \quad \text{in $A$} \qquad \text{for $t \in
[1-\delta,1+\delta]$, $\tau \in [-\rho,\rho]$.}
$$
Combining this information with (\ref{radial-weakly-stable-assumption-2}) we deduce that 
$$
t(u+\tau v) \in \mathcal K \qquad \text{for $t \in
[1-\delta,1+\delta]$, $\tau \in [0,\rho]$.}
$$
Moreover, since $I'\bigl((1-\delta)u\bigr)u>0>I'\bigl((1+\delta)u\bigr)u$, cf. for instance \cite[Proposition~4.1]{BCNW2023}, there exists $s_* \in (0,\rho)$ for which  
$$
I'\bigl((1-\delta)(u+s_*v)\bigr)(u+s_*v)>0>I'\bigl((1+\delta)(u+s_*v)\bigr)(u+s_*v).
$$
By the intermediate value theorem, there exists 
$t_* \in [1-\delta,1+\delta]$ with 
$$
I'\bigl(t_*(u+s_*v)\bigr)(u+s_*v)=0,
$$
and therefore
\begin{equation}\label{eq:u*N}
u_*:= t_*(u+s_*v) \in {\mathcal N}_{\mathcal{K}}.
\end{equation}
Since the inequality $I(u)\ge I(tu)$ holds for every $t\ge 0$
(cf. \cite[Proposition 4.1]{BCNW2023}), by a first order Taylor expansion with integral reminder and \eqref{second-der-stable-pointw-est}, we have 
\begin{align*}
I(u_*) -I(u)&\le I(u_*) -I(t_*u)\\
&= s_* t_* I'(t_*\, u)v +  t_*^2 \int_0^{s_*}
                                               I''(t_*(u+\tau v))(v,v)(s_*-\tau) \:d\tau\\
&< s_* t_* I'(t_*\, u)v.
\end{align*}
Now, using \eqref{eq:I_def}, the equation satisfied by $u$, and \eqref{radial-weakly-stable-assumption-4},
\begin{align*}
I(u_*) -I(u)& < s_*t_*\left( t_* \langle u, v \rangle_{H^1(A)} - t_*^{p-1} \int_{A} a(|x|)u^{p-1} v\,dx \right)\\
&= s_*t_*^2(1-t_*^{p-2})\int_{R}^{\infty} \mathfrak{a}(r) \mathfrak{u}^{p}(r) \left( \int_{\mathbb S_r^{N-1}} \eta(x) \,d\sigma \right) \,dr = 0.
\end{align*}
Consequently,  by \eqref{eq:u*N}, $c_I \le I(u_*)<I(u)=c_I$ gives the desired contradiction.
\end{proof}

In order to obtain multiplicity, it remains to show that different values of $m$ provide different solutions.

\begin{lemma}\label{lem:m_m'}
Let $m,m'$ be two integers such that $2\leq m<m'\leq N-1$. 
Let $u$ be a function defined a.e.  in $A$ having both symmetries
\begin{align}
u(x) & = \tilde{\mathfrak{u}}_m \left( \sqrt{x_1^2+\ldots+x_m^2},  \sqrt{x_{m+1}^2+\ldots+x_N^2}  \right) 
\label{eq:mm'1} \\
& = \tilde{\mathfrak{u}}_{m'} \left( \sqrt{x_1^2+\ldots+x_{m'}^2},  \sqrt{x_{m'+1}^2+\ldots+x_N^2}  \right),
\label{eq:mm'2}
\end{align}
then $u$ is radial in $A$.
\end{lemma}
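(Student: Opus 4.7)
The plan is to show that on each sphere $\mathbb S_r^{N-1} \subset A$ (with $r > R$), the function $u$ is (a.e.) constant. By (\ref{eq:mm'1}), on the sphere one has $x_1^2 + \ldots + x_m^2 = r^2 - t_m^2$, where $t_m := \sqrt{x_{m+1}^2 + \ldots + x_N^2}$, so the restriction of $u$ to $\mathbb S_r^{N-1}$ depends only on $t_m$; that is, there exists $f_r : [0,r] \to \mathbb R$ with $u(x) = f_r(t_m(x))$ a.e.\ on $\mathbb S_r^{N-1}$. Analogously, by (\ref{eq:mm'2}), $u(x) = g_r(t_{m'}(x))$ a.e.\ on the sphere, with $t_{m'} := \sqrt{x_{m'+1}^2 + \ldots + x_N^2}$ and some $g_r : [0,r] \to \mathbb R$. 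The problem reduces to showing that the identity $f_r(t_m(x)) = g_r(t_{m'}(x))$ forces both $f_r$ and $g_r$ to be constant.

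The key step is to describe the joint image of the map $x \mapsto (t_m(x), t_{m'}(x))$ on $\mathbb S_r^{N-1}$. Since $m < m'$, one has $t_m^2 - t_{m'}^2 = x_{m+1}^2 + \ldots + x_{m'}^2 \ge 0$, so the image is contained in the triangle $T_r := \{(\alpha,\beta) : 0 \le \beta \le \alpha \le r\}$. Conversely, every pair $(\alpha,\beta) \in T_r$ is realized on the sphere: choose $x_1 = \sqrt{r^2 - \alpha^2}$, $x_{m+1} = \sqrt{\alpha^2 - \beta^2}$, $x_{m'+1} = \beta$, and set every remaining coordinate to $0$. The hypothesis $2 \le m < m' \le N-1$ guarantees at least one available slot in each of the three index groups $\{1,\ldots,m\}$, $\{m+1,\ldots,m'\}$, and $\{m'+1,\ldots,N\}$, so the assignment is legitimate; hence the image of $(t_m, t_{m'})$ is exactly $T_r$.

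Combining the two previous steps, the identity $f_r(\alpha) = g_r(\beta)$ must hold on the entire two-dimensional region $T_r$. Fixing $\alpha \in (0, r]$ and letting $\beta$ vary over $[0, \alpha]$ forces $g_r$ to be constant on $[0, \alpha]$; letting $\alpha \nearrow r$ yields that $g_r$ is constant on $[0,r]$. Consequently $u$ is a.e.\ constant on $\mathbb S_r^{N-1}$, and since $r > R$ was arbitrary, $u$ depends only on $|x|$. I foresee no substantial obstacle: the argument is essentially combinatorial. The only minor bookkeeping is that $u$ is defined only almost everywhere, so the identity $f_r \circ t_m = g_r \circ t_{m'}$ must be transported to $T_r$ in the a.e.\ sense (e.g.\ via a change of variables using that the fibre of $(t_m,t_{m'})$ over each point in the interior of $T_r$ has positive measure), but this is routine.
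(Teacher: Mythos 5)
Your argument is correct, and it proceeds along a genuinely different route from the paper's. The paper's proof is a ``slice-and-project'' argument: it isolates the $2$-plane $\Pi = \{x_2=\dots=x_{m'-1}=x_{m'+1}=\dots=x_N=0\}$, on which \eqref{eq:mm'2} gives $u(x)=\tilde{\mathfrak u}_{m'}\bigl(\sqrt{x_1^2+x_{m'}^2},\,0\bigr)=u(|x|)$, and then, for a general $x\in A$, uses \eqref{eq:mm'1} (together with $m<m'$) to produce a point $\tilde x\in\Pi$ with $u(\tilde x)=u(x)$ and $|\tilde x|=|x|$. This is shorter and purely pointwise. Your proof instead works sphere by sphere: it reduces both symmetries to the scalar quantities $t_m$ and $t_{m'}$ on $\mathbb S^{N-1}_r$, observes $t_m\ge t_{m'}$, and shows the joint image of $(t_m,t_{m'})$ fills the whole triangle $T_r$, so that a function depending simultaneously only on $t_m$ and only on $t_{m'}$ must be constant on the sphere. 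Your version is a bit longer, but it makes the role of $m<m'$ and of the bounds $2\le m$, $m'\le N-1$ transparent (each just guarantees a free coordinate slot), and it is arguably more natural for functions only defined a.e., since one can push the surface measure forward to $T_r$ and argue measure-theoretically, as you note. Both proofs are complete modulo the same routine a.e.\ bookkeeping.
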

\begin{proof} For the reader's convenience, we report here the proof that can also be found in  \cite[Lemma 4.1]{CowanMoameni2022JDE}.
Let 
\[
\Pi:=\{x\in A: \, x_2=\ldots=x_{m'-1}=x_{m'+1}=\ldots=x_N=0\}.
\]
We infer from \eqref{eq:mm'2} that, for every $x\in \Pi$, it holds
\[
u(x)=\tilde{\mathfrak{u}}_{m'}\left(\sqrt{x_1^2+x_{m'}^2},0\right).
\]
As, for $x\in\Pi$, $|x|=\sqrt{x_1^2+x_{m'}^2}$, this implies that the restriction of $u$ to $\Pi$ is radial, that is
\begin{equation}\label{eq:u-radial}
u(x)= u(|x|) \qquad \text{for } x\in\Pi.
\end{equation}
On the other hand, for every $x=(x_1,\ldots,x_N)\in A$, we let $\tilde{x}=(\tilde{x}_1,\ldots,\tilde{x}_N)$ be given by
\[
\tilde{x}_1=\sqrt{x_1^2+\ldots+x_m^2},
\quad
\tilde{x}_{m'}=\sqrt{x_{m+1}^2+\ldots+x_N^2},
\quad
\tilde{x}_\ell=0 \ \ \forall \, \ell\neq 1,m'.
\]
We notice that $\tilde{x}\in \Pi$ and that $|x|=|\tilde{x}|$. 
Being $m<m'$, relation \eqref{eq:mm'1} provides 
\[
u(x)=\tilde{\mathfrak u}_m(\tilde{x}_1,\tilde{x}_{m'})=u(\tilde{x}).
\]
Thus, combining with \eqref{eq:u-radial}, we obtain
\[
u(x)=u(\tilde{x})= u(|\tilde{x}|)= u(|x|)
\]
for every $x\in A$, namely $u$ is radial.
\end{proof}

\begin{proof}[$\bullet$ Proof of Theorem \ref{thm:main-a-rad}] 
Let $2\leq n \leq N-1$ be an integer.
Being $a$ radial,  problem \eqref{P-rad} admits a radial solution for every $p>2$,  see the Introduction.
It remains to exhibit $n-1$ nonradial rotationally nonequivalent solutions.

Since $a$ is radial, $a\in\mathcal C$ for every value of $m\in\{2,\ldots,N-1\}$. Then Theorem~\ref{thm:main_existence} applies for any value of $m$ that ensures 
\begin{equation}\label{eq:treshold}
p < 2^*_{N-m+1}.
\end{equation}
Since the map $n\mapsto 2^*_n$ is strictly decreasing,  the condition $p<2^*_n$ implies \eqref{eq:treshold} provided that $m\in\{N-n+1,\ldots,N-1\}$.
Therefore, Theorem \ref{thm:main_existence} applies with $n-1$ choices of $m$. 
By Proposition \ref{prop:non_radial} and Lemma \ref{lem:m_m'} the solutions found are nonradial and rotationally nonequivalent.
\end{proof}

\begin{proof}[$\bullet$ Proof of Corollary \ref{coro:multiplicity}] 
\noindent $\mathrm{(i)}$ Let $n=2$.
Since $2^*_2=\infty$ and $R^*=0$ for $p$ satisfying \eqref{eq:coro1},  Theorem \ref{thm:main-a-rad} ensures that, for every inner radius $R>0$, problem \eqref{P-rad} admits two solutions, one of which is radial and the other is not.
\par
\smallskip
\noindent $\mathrm{(ii)}$ Let $n\geq3$ integer.
The inequality 
\[
2+\frac{8N}{(N-2)^2} < 2^*_n
\]
is equivalent to $n < N/2 + 2/N$, that is, being $n$ integer, $n \leq N/2$.  
Notice that the condition $N \geq 6$ appearing in the statement ensures that the choice $n = 3$ is admissible (the case $n=2$ is treated in (i)). 
Summing up, for $N \geq 6$ and $3\leq n\leq N/2$,  the interval 
\[
\left(2+\frac{8N}{(N-2)^2} , 2^*_n \right)
\]
is non-empty.  
For $p$ in this interval, $R^*=0$, so that Theorem \ref{thm:main-a-rad} applies providing $n$ rotationally nonequivalent solutions of \eqref{P-rad} for every value of  $R>0$.
\end{proof}

\section{Existence in nonradial exterior domains}
\label{sec:exist-nonr-exter}

In this section, we sketch the proof of Theorem~\ref{thm:main_existence-nonradial}. Henceforth, we fix $m \in \{2,\dots,N-1\}$ and a $C^2$-function $g:[0,\infty) \to \R$ satisfying \eqref{eq:assumption-g}, and we let the exterior domain $A_g$ be given by \eqref{eq:def-A_g}. It easily follows from \eqref{eq:assumption-g}  that $A_g$ is a domain of class $C^2$ with boundary given by 
$$
\partial A_g= \{x \in \R^N\::\: x_1^2+ \dots + x_m^2 + g(x_{m+1}^2 + \cdots + x_N^2) =0\},
$$
and that $\R^N \setminus A_g$ is a compact subset containing $B_R(0)$ for sufficiently small $R>0$. In the following, we fix $R>0$ with this property, and we let $A := \{x \in \R^N\::|x| > R\}$. Then we have $A_g \subset A$ and therefore $H^1_0(A_g) \subset H^1_0(A)$ by trivial extension, which implies that $\mathcal C_g \subset \mathcal C$ and $\mathcal K_g \subset \mathcal K$.

As a consequence, Lemma~\ref{le:continuous_embedding} and Proposition~\ref{prop:compact_embedding} directly extend to $A_g$ and $\mathcal K_g$ in place of $A$ and $\mathcal K$. 

To extend Proposition~\ref{prop:invarianceC}, we need the following important property which is guaranteed by the upper bound on $g'$ in assumption~(\ref{eq:assumption-g}).

\begin{lemma}
  \label{positivity-boundary}
If $v \in C^1(\overline{A_g})$ satisfies $v \equiv 0$ on $\partial A_g$ and
$v \ge 0$ in $A_g$, then $v_\theta \le 0$ on $\partial A_g$, where, as before, $v_\theta$ denotes the derivative of $v$ with respect to the angle $\theta$.
\end{lemma}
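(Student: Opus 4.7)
The plan is to carry out the proof in the $(s,t)$ coordinates of Subsection~\ref{subsec:chofvar}, in which the symmetry of $v$ allows us to write $v(x)=\tilde v(s,t)$ and the domain $A_g$ becomes $\{\phi>0\}$ for the defining function $\phi(s,t):=s^2+g(t^2)$. By \eqref{eq:assumption-g}, the boundary $\partial A_g$ corresponds to the smooth arc
\[
\Gamma:=\{(s,t)\in[0,\infty)^2:\phi(s,t)=0\},
\]
joining $(s_*,0)$, with $s_*:=\sqrt{-g(0)}>0$, to $(0,t_*)$, with $t_*>0$ determined by $g(t_*^2)=0$. Using \eqref{eq:gradient-s-t} and \eqref{eq:gradient-r-theta}, we have $v_\theta=-t\,\tilde v_s+s\,\tilde v_t$, so the conclusion reduces to showing that this combination is nonpositive along $\Gamma$.

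For an interior boundary point $(s_0,t_0)\in\Gamma$ with $s_0,t_0>0$, I would argue that the hypotheses $\tilde v\ge 0$ on $\{\phi>0\}$ and $\tilde v\equiv 0$ on $\Gamma$ force $\nabla_{s,t}\tilde v(s_0,t_0)$ to be a nonnegative multiple of the inner normal to $\Gamma$, which is parallel to $\nabla \phi(s_0,t_0)=(2s_0,\,2t_0 g'(t_0^2))$. Hence there exists $\lambda\ge 0$ with $\nabla_{s,t}\tilde v(s_0,t_0)=\lambda\bigl(s_0,\,t_0 g'(t_0^2)\bigr)$, and substituting into the formula for $v_\theta$ yields
\[
v_\theta(s_0,t_0) \;=\; \lambda\,s_0 t_0\bigl(g'(t_0^2)-1\bigr)\;\le\; 0,
\]
where the last inequality is exactly the content of the upper bound $g'\le 1$ in \eqref{eq:assumption-g}.

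It remains to handle the two endpoints of $\Gamma$, where the formula above degenerates. At $(s_*,0)$ one has $v_\theta=s_*\tilde v_t(s_*,0)$, and the $O(N-m)$-symmetry of $v$ together with its $C^1$-regularity in Cartesian coordinates forces $\tilde v_t(s,0)=0$: indeed, from \eqref{eq:gradient-s-t} the partial derivatives $\partial_{x_i}v=(x_i/t)\,\tilde v_t$ for $i>m$ extend continuously to points with $t=0$ only if $\tilde v_t(\cdot,0)\equiv 0$. The analogous argument at $(0,t_*)$, using $m\ge 2$, gives $\tilde v_s(0,t)=0$, so $v_\theta=0$ at both endpoints. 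The whole strategy hinges on $g'\le 1$, which is precisely what makes the sign in $g'(t_0^2)-1\le 0$ come out right; the other ingredients of \eqref{eq:assumption-g}, namely $\inf g'>0$ and $g(0)<0$, only serve to guarantee the geometric description of $\Gamma$. The anticipated obstacle is not the algebra but the rigorous derivation of the boundary-regularity identities $\tilde v_t(\cdot,0)=\tilde v_s(0,\cdot)=0$ from the Cartesian $C^1$-assumption combined with axisymmetry.
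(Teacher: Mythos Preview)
Your argument is essentially identical to the paper's: both write $\nabla v$ on $\partial A_g$ as a nonnegative multiple of the inward normal and then obtain $v_\theta=\lambda\,s t\bigl(g'(t^2)-1\bigr)\le 0$ from the bound $g'\le 1$. One caveat: the lemma as stated does not assume $v$ is $O(m)\times O(N-m)$-invariant, so the reduction $v(x)=\tilde v(s,t)$ is unjustified in general; the paper sidesteps this by carrying out the same computation directly in $\R^N$ via $v_\theta(x)=\nabla v(x)\cdot\bigl(-\tfrac{t}{s}x^s+\tfrac{s}{t}x^t\bigr)$, valid for any $C^1$ function at points with $s,t>0$ (and it does not separately discuss the endpoint cases you treat).
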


\begin{proof} In the following, for $x \in \R^N$ we write
  $x = x^s + x^t$ with
  $$
  x^s= (x_1,\dots,x_m,0,\dots,0) \quad \text{and}\quad  x^t = (0,\dots,0,x_{m+1},\dots,x_N) \in  \R^N,
  $$
  so that
  $|x^s|=s$ and $|x^t|=t$ with $s$ and $t$ given in (\ref{eq:s-t}). By \eqref{eq:assumption-g} and \eqref{eq:def-A_g},
a normal vector field
on $\partial A_g$ pointing inside $A_g$ is given by
$$
\mu: \partial A_g \to \R^N, \qquad \mu(x)= x^s + g'(t)x^t.
$$
By assumption, we have $\nabla v(x) = c(x) \mu(x)$ for $x \in \partial A_g$ with some nonnegative function $c$, and therefore, by 
\eqref{eq:assumption-g},
\begin{equation*}
v_\theta(x)= \nabla v(x) \cdot \Bigl(-\frac{t}{s}x^s + \frac{s}{t}x^t\Bigr)= c(x)\mu(x)\cdot \Bigl(-\frac{t}{s}x^s + \frac{s}{t}x^t\Bigr)= c(x)st \bigl(-1+g'(t)\bigr) \le 0
\end{equation*}
for $x \in \partial A_g$, as claimed.
\end{proof}

With the help of this lemma, we can now prove Lemma~\ref{le:invarianceK} and Proposition~\ref{prop:invarianceC} in exactly the same way as before for $A_g$ and $\mathcal K_g$ in place of $A$ and $\mathcal K$, noting that we only used the fact that $v_\theta \le 0$ on $\partial A$ for the solutions $v$ appearing in Lemma~\ref{le:invarianceK} and Proposition~\ref{prop:invarianceC} and not the equality. As a consequence, we may verify the assumptions (i) and (ii) of Theorem~\ref{thm:thm2.1CM} precisely as in Section~\ref{sec:end-proof-theorem}. To verify (iii), we argue as follows. Let $u \in \mathcal K_g \setminus \{0\}$ be arbitrary, and let $v$ be the unique solution of~(\ref{P-up-1}) with $A$ replaced by $A_g$. Then $v \in \mathcal K_g$, and $v$ is strictly positive in $A_g$ by the strong maximum principle. Hence $\int_{A_g}a|v|^pdx >0$ by assumption~(\ref{a_assumptions}), which implies that $I(\lambda v) \le 0$ for sufficiently large $\lambda>0$, as needed.

Hence Theorem~\ref{thm:thm2.1CM} applies and yields the existence of a solution $u$ of (\ref{P_g}) satisfying (\ref{eq:K-ground-state-g}). The proof of Theorem~\ref{thm:main_existence-nonradial} is thus finished.

\appendix
\section{Proof of Theorem \ref{thm:thm2.1CM}}\label{sec:appendix}

Let $V=H^1_0(\Omega)\cap L^p(\Omega)$. Notice that $V$ equipped with the norm
\[
\|\cdot\|_V=\|\cdot\|_{H^1(\Omega)}+\|\cdot\|_{L^p(\Omega)}
\]
is a Banach space. 
The formal Euler-Lagrange functional \eqref{eq:J_def} is defined more precisely as $J:V\to (-\infty,+\infty]$ given by
\begin{equation}\label{eq:J_def2}
J(u)=\Psi(u)-\Phi(u),
\end{equation}
where, for every $u\in V$,
\[
\Psi(u):=\begin{cases}
\frac{\|u\|_{H^1(\Omega)}^2}{2} \ &\text{if }u\in K \\
+\infty & \text{otherwise}
\end{cases}
\qquad\quad
\Phi(u):=\frac{1}{p}\int_\Omega w|u|^p dx.
\]
Notice that $\Phi\in C^1(V;\R)$ and let, for $u,v\in V$,
\begin{equation}\label{eq:def_G}
G(u,v):=\Psi(u)-\Psi(v)- \Phi'(u)(u-v).
\end{equation}
Let us show that every Palais-Smale sequence, in the sense of \cite[Theorem 3.2]{S}, is bounded.
%
%
Let $(u_n)\subset K$ be a Palais-Smale sequence, i.e. 
\begin{equation}\label{eq:def_PS_sulkin}
J(u_n)\to c\in\R, \qquad
G(u_n,v)\leq \varepsilon_n \|u_n-v\|_V \ \ \forall v\in V,
\end{equation}
with $\varepsilon_n\to0$. Let
\begin{equation}\label{eq:alpha_beta_def}
\beta\in (1,p-1) \quad\text{ and }\quad \alpha=\frac{1}{p(\beta-1)}
\end{equation}
On the one hand, taking $v= \beta u_n$ in \eqref{eq:def_PS_sulkin}, we have
\begin{equation}\label{eq:PS_calc}
J(u_n)+\alpha G(u_n,\beta u_n) \leq c+1 +\alpha(\beta-1) \varepsilon_n \|u_n\|_V,
\end{equation}
where we used that $v\in K$, by the assumption that $K$ is a cone.
On the other hand, by \eqref{eq:J_def2}, \eqref{eq:def_G} and \eqref{eq:alpha_beta_def},
\begin{multline*}
J(u_n)+\alpha G(u_n,\beta u_n) =
\frac{\|u_n\|^2_{H^1(\Omega)}}{2} - \frac{1}{p}\int_\Omega w|u_n|^p dx 
+\alpha \frac{\|u_n\|^2_{H^1(\Omega)}}{2} \\
-\alpha \beta^2 \frac{\|u_n\|^2_{H^1(\Omega)}}{2}
+\alpha(\beta-1)\int_\Omega w|u_n|^p dx \\
= \frac{p-(\beta+1)}{2p} \|u_n\|^2_{H^1(\Omega)}.
\end{multline*}
By combining the last equality with \eqref{eq:PS_calc}, together with the fact that $\|\cdot\|_V$ and $\|\cdot\|_{H^1(\Omega)}$ are equivalent in $K$ by (i), we infer that $\|u_n\|_{H^1(\Omega)}$ is bounded.

The rest of the proof follows as in \cite{CowanMoameni2022}.

\section*{Acknowledgments}
\noindent 
A. Boscaggin was partially supported by the INdAM - GNAMPA Project 2023 ``Analisi qualitativa di problemi differenziali nonlineari''.
F. Colasuonno and B. Noris were partially supported by the INdAM - GNAMPA Projects 2022 ``Studi asintotici in problemi parabolici ed ellittici'' CUP\textunderscore E55F22000 270001 and 2023 ``Interplay between parabolic and elliptic PDEs'' CUP\textunderscore E53C22001 930001. 
B. Noris was partially supported by the MUR-PRIN project 2022R537CS ``NO$^3$'' granted by the European Union - Next Generation EU.


\def\cprime{$'$}

\end{document}